\documentclass[12pt,reqno]{article}
\usepackage{amsmath,amsfonts,amssymb,amsthm,mathrsfs}
\usepackage{fullpage}
\usepackage{enumerate}
\usepackage[usenames,dvipsnames]{color}
\usepackage{verbatim} 
\usepackage{array,longtable} 
\usepackage[thinlines]{easytable} 
\usepackage{graphicx,epstopdf} 
\usepackage{mathtools}
\usepackage[colorlinks=true,linkcolor=blue,citecolor=blue,filecolor=blue,urlcolor=blue]{hyperref} 

\theoremstyle{plain} \newtheorem{thm}{Theorem}[section]
\theoremstyle{plain} \newtheorem{lemma}[thm]{Lemma}
\theoremstyle{plain} \newtheorem{prop}[thm]{Proposition}
\theoremstyle{plain} \newtheorem{cor}[thm]{Corollary}
\theoremstyle{plain} \newtheorem{conj}[thm]{Conjecture}
\theoremstyle{definition} \newtheorem{defin}[thm]{Definition}
\theoremstyle{definition} \newtheorem{defins}[thm]{Definitions}

\newcommand{\btt}{\mathtt{b}}
\newcommand{\stt}{\mathtt{s}}
\newcommand{\xtt}{\mathtt{x}}
\newcommand{\ytt}{\mathtt{y}}
\newcommand{\wtt}{\mathtt{w}}

\newcommand{\xqed}[1]{%
  \leavevmode\unskip\penalty9999 \hbox{}\nobreak\hfill
  \quad\hbox{\ensuremath{#1}}}

\DeclareSymbolFont{bbold}{U}{bbold}{m}{n}
\DeclareSymbolFontAlphabet{\mathbbold}{bbold}

\author{Jacob D. Baron\thanks{Department of Mathematics, Rutgers University, Piscataway, NJ. This author's work was supported by the U.S. Department of Homeland Security under Grant Award 2012-ST-104-000044. The views and conclusions contained in this document are those of the authors and should not be interpreted as necessarily representing the official policies, either express or implied, of the U.S. Department of Homeland Security.} \and Jeff Kahn\thanks{Department of Mathematics, Rutgers University, Piscataway, NJ. This author's work was supported by the National Science Foundation under Grant Award DMS1201337.} }
\title{Tuza's Conjecture is Asymptotically Tight \\ for Dense Graphs}
\date{May 2014}


\begin{document}
\maketitle

\begin{abstract}
An old conjecture of Zs.\ Tuza says that for any graph $G$, the ratio of the minimum size, $\tau_3(G)$, of a set of edges meeting all triangles to the maximum size, $\nu_3(G)$, of an edge-disjoint triangle packing is at most 2. Here, disproving a conjecture of R.\ Yuster, we show that for any fixed, positive $\alpha$ there are arbitrarily large graphs $G$ of positive density satisfying $\tau_3(G) > (1-o(1))|G|/2$ and $\nu_3(G) < (1+\alpha)|G|/4$.

\vspace{8pt}

\noindent AMS 2010 Mathematics Subject Classification: 05C70 (primary); 05C50 (secondary).
\end{abstract}

\section{Introduction}

Following \cite{Yuster} we write $\tau_3(G)$ for the minimum size of a \emph{triangle edge cover} (set of edges meeting all triangles) in a graph\footnote{All graphs in this paper are finite, simple and undirected.} $G$ and $\nu_3(G)$ for the maximum size of a \emph{triangle packing} (collection of edge-disjoint triangles) in $G$. (In standard language these are the matching and vertex cover numbers of the hypergraph with vertex set $E(G)$ and edges the triangles of $G$.)

While $\tau_3(G) \leq 3\nu_3(G)$ is trivial (for any $G$), a 33-year-old conjecture of Zsolt Tuza \cite{Tuza} holds that this can be improved:

\begin{conj}
For any $G$, $\tau_3(G) \leq 2\nu_3(G)$.
\end{conj}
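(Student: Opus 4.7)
The plan is to attack the conjecture through the standard ``take a maximum packing and modify it'' strategy. Let $\mathcal{T} = \{t_1,\ldots,t_k\}$ be a maximum edge-disjoint triangle packing in $G$, so $k = \nu_3(G)$. By maximality, every triangle of $G$ shares at least one edge with some $t_i \in \mathcal{T}$, so $F := \bigcup_i E(t_i)$ is already a triangle cover of size $3k$; this recovers the trivial $\tau_3(G) \le 3\nu_3(G)$. The goal is then to refine $F$ down to size $2k$ by deleting one edge $f_i$ from each $E(t_i)$ while preserving the covering property.

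Equivalently, for every triangle $t$ of $G$ there must exist an edge $e \in E(t) \cap E(t_i)$ with $e \ne f_i$ for some $i$. Triangles sharing an edge with two distinct packed triangles are covered automatically, so the binding constraints come only from triangles $t$ each of whose edges lies in at most one $t_i$; each such $t$ imposes a local forbidden value $f_{i(t)} \ne e(t)$, where $e(t)$ is the unique edge of $t$ in some packed $t_{i(t)}$. The problem reduces to a constraint satisfaction problem: choose $f_i \in E(t_i)$ avoiding a list of forbidden values coming from the triangles of $G$ incident to $t_i$.

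My first attempt would be probabilistic: pick each $f_i$ uniformly from $E(t_i)$, and apply the Lov\'asz local lemma (or entropy compression) to the bad events ``triangle $t$ is uncovered,'' each of which has probability roughly $1/3$ and depends on only a few other events when the neighbourhood of $t$ in the triangle intersection hypergraph is sparse. To handle the dense regions where this fails, I would preprocess $G$ by locating short list of obstructing subconfigurations---books, $K_4$'s, and their immediate relatives---verifying Tuza's inequality on each by hand, and replacing the fragment of $\mathcal{T}$ inside such a region by a tailored cover before running the random selection on the residual graph.

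The main obstacle, and the reason the conjecture has resisted for over thirty years, is that the forbidden-value lists can be globally incompatible even when every local configuration is individually benign: the known bounds (Haxell's $66/23$, Krivelevich's fractional $\tau_3 \le 2\nu_3^{*}$) essentially measure how far pure local or fractional arguments fall short of the target constant $2$. Closing the gap seems to demand coupling the \emph{choice} of $\mathcal{T}$ with the free-edge selection---augmenting $\mathcal{T}$ along short alternating structures in the triangle intersection hypergraph whenever the CSP has no solution---so that the packing itself is chosen to make the subsequent deletion step feasible. This interdependent optimisation is the step I expect to be the crux, and where the present paper's constructions of near-extremal $G$ should be especially instructive about what any proof must accommodate.
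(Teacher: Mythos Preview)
This statement is Tuza's conjecture, which the paper does \emph{not} prove; it is listed precisely as an open conjecture, and the paper's contribution is the opposite direction---constructing graphs showing the constant $2$ cannot be improved asymptotically for dense $G$. So there is no ``paper's own proof'' to compare against.

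Your proposal is candid about this: it is a research plan, not a proof, and you explicitly flag the gap yourself. The local-lemma/CSP step fails exactly where you say it does---the dependency degree of the bad events is not bounded, and the ``preprocess dense regions by hand'' idea has no mechanism for preventing the hand-fixed covers in different regions from interfering with one another or with the random selection on the remainder. The final paragraph is an accurate diagnosis of why this route stalls (Haxell's $66/23$ is essentially the limit of packing-then-pruning arguments), but recognising the obstacle is not the same as surmounting it. As written, the proposal establishes only the trivial $\tau_3(G)\le 3\nu_3(G)$ and then gestures at a program whose key step---coupling the choice of packing to the feasibility of the deletion CSP---remains entirely unspecified. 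That is the genuine gap: nothing here gets past $3\nu_3(G)$, let alone to $2\nu_3(G)$.
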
 

\noindent (This is sharp for the complete graphs of orders 4 and 5.)

The best general result in this direction remains that of Haxell \cite{Hax}, who showed \[ \tau_3(G) \leq (66/23)\nu_3(G).\] On the other hand, as noted in \cite{Yuster}, a combination of results of Krivelevich \cite{Kriv} and Haxell and R\"odl \cite{HaxRod} implies that for any $G$, \[\tau_3(G) < 2\nu_3(G) + o(n^2)\] (limits as $n:=|V(G)| \rightarrow \infty$). In particular, for any fixed $\beta>0$ and $G$ ranging over graphs satisfying $\tau_3(G) \geq \beta n^2$,
\begin{equation}
\label{eq:denseTuza}
\tau_3(G) < (2+o(1))\nu_3(G).
\end{equation}
That is, Tuza's conjecture is asymptotically correct for such graphs.

The question of Raphael Yuster \cite{Yuster} that motivates us here is: is the constant 2 in \eqref{eq:denseTuza} optimal? That is, is Tuza's conjecture still (asymptotically) tight for dense graphs with no subquadratic triangle cover? Yuster suggested not, at least in the special case where $\tau_3(G)$ is nearly as large as possible:

\begin{conj}[\cite{Yuster}]
\label{conj:StrongYuster}
For fixed $\beta>0$ and $G$ ranging over graphs of density at least $\beta$, \[ \tau_3(G) > (1-o(1))|G|/2 \quad \Longrightarrow \quad \nu_3(G) > (1-o(1))|G|/3 \] 
\end{conj}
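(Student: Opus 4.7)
Since Conjecture 1.2 is precisely the claim the abstract advertises as false, the natural task is to sketch a family of counterexamples: graphs $G$ of positive density with $\tau_3(G) \ge (1-o(1))|G|/2$ and $\nu_3(G)$ bounded strictly below $(1-\Omega(1))|G|/3$.

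My skeleton is $m := n/4$ pairwise vertex-disjoint copies of $K_4$, on clusters $V_1,\dots,V_m$. Since $\tau_3(K_4)=2$, $\nu_3(K_4)=1$, and a perfect matching inside a $K_4$ covers all four of its triangles, this already yields $\tau_3 = n/2$ and $\nu_3 = n/4$ exactly; the sole defect is the density, which is only $O(1/n)$. To boost the density to a positive constant without disturbing either triangle parameter, I would add a graph $H$ of $\Theta(n^2)$ extra edges that introduces no new triangles. The three possible new triangle types must all be excluded: three $H$-edges (prevent by making $H$ triangle-free); two $K_4$-edges plus one $H$-edge (automatically impossible, since two $K_4$-edges meeting at a vertex live inside a single cluster while any $H$-edge crosses clusters); and one $K_4$-edge $uv\subset V_i$ together with $H$-edges $uw,vw$ to some outside vertex $w$ (prevent by requiring that no vertex has two $H$-neighbors inside any single cluster).

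Both conditions can be met simultaneously by partitioning the clusters into two halves $A$ and $B$ and, for every pair $V_i\in A,\ V_j\in B$, inserting into $H$ an arbitrary perfect matching between $V_i$ and $V_j$. The resulting $H$ is bipartite (hence triangle-free); the matching structure guarantees that each vertex has exactly one $H$-neighbor in each cluster on the opposite side; and $|E(H)| = 4(m/2)^2 = n^2/16$, so the overall density of $G$ tends to $1/8$. Because no $H$-edge lies in any triangle of $G$, the triangles of $G$ are exactly those of the individual $K_4$'s, and hence $\tau_3(G) = n/2$ and $\nu_3(G) = n/4$ are preserved. In particular $\nu_3(G)$ is only $\tfrac{3}{4}$ of the conjectured lower bound $|G|/3$, contradicting Conjecture 1.2 with room to spare.

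The main obstacle I anticipate is the design of $H$: a generic dense bipartite graph between $A$- and $B$-clusters would badly violate the "at most one neighbor per cluster" condition and create vast numbers of new triangles, so some structural choice such as the perfect-matching trick (or an equivalent regularity) is essential. One should also check that $\tau_3$ is not reduced once the $H$-edges become available to a cover, but this is immediate because $H$-edges lie in no triangle and therefore cover nothing; the minimum cover must still spend $\ge 2$ edges inside each of the $m$ copies of $K_4$.
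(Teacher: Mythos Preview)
There is a fatal misreading: in this paper $|G|$ denotes $|E(G)|$, not $|V(G)|$ (this is stated explicitly right after Conjecture~\ref{conj:StrongYuster}). Your arithmetic --- $\tau_3(G)=n/2$ matching ``$|G|/2$'', and $\nu_3(G)=n/4$ being ``$\tfrac34$ of $|G|/3$'' --- only works if $|G|=n$. With the correct reading, your final graph has $|G|\sim n^2/16$, so the hypothesis $\tau_3(G)>(1-o(1))|G|/2$ demands $\tau_3(G)\gtrsim n^2/32$, while in your construction $\tau_3(G)=n/2=o(|G|)$. The hypothesis is not even approximately satisfied, so this is not a counterexample.

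The gap is not repairable by tweaking $H$. Any edge you add that lies in no triangle increases $|G|$ without increasing $\tau_3(G)$, so your ``density booster'' strategy \emph{necessarily} drives $\tau_3(G)/|G|$ toward $0$. Even before adding $H$, the bare union of $K_4$'s already has $\tau_3=2m=|G|/3$, well below $|G|/2$. To disprove the conjecture one needs a dense graph in which essentially \emph{every} edge lies in many triangles (so that no cheap cover exists), yet the triangles are arranged so that no packing can use substantially more than a quarter of the edges. This is exactly what makes the problem hard and why the paper's construction --- a random subgraph of a blowup of the double of a carefully chosen pseudorandom triangle-free graph, analyzed via a fractional ``configuration'' framework and the Regularity Lemma --- is as elaborate as it is.
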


\noindent (where density is $|G|/\binom{n}{2}$, and $|G|=|E(G)|$). This would of course (for the graphs considered) be a big improvement over \eqref{eq:denseTuza}, which promises only $\nu_3(G) > (1-o(1))|G|/4$.

Note that the inequalities $\tau_3(G) < |G|/2$ and $\nu_3(G) \leq |G|/3$ are easy and trivial (respectively), so Yuster's conjecture says that if $G$ is dense and $\tau_3(G)$ is close to its trivial upper bound, then so must be $\nu_3(G)$.

Yuster also suggested weakening Conjecture \ref{conj:StrongYuster} to say only that there is \emph{some} fixed $\alpha \in (0,1/3)$ (not depending on $\beta$) such that 
\begin{align}
\label{conj:WeakYuster}
\tau_3(G) > (1-o(1))|G|/2 \quad \Longrightarrow \quad \nu_3(G) > (1+\alpha)|G|/4,
\end{align}
which would still significantly improve on \eqref{eq:denseTuza} (when $\tau_3(G) > (1-o(1))|G|/2$). (Yuster did show that \eqref{conj:WeakYuster} is true if we allow $\alpha$ to depend on $\beta$.)

Surprisingly it turns out that even the weaker conjecture is wrong:

\begin{thm}[Main Theorem]
\label{thm:MAINTHEOREM}
For all $\alpha >0$, there exist $\beta>0$ and arbitrarily large graphs $G$ satisfying
\begin{itemize}
\item $|G| \geq \beta \binom{n}{2}$,
\item $\tau_3(G) > (1-o(1))|G|/2$, and
\item $\nu_3(G) < (1+\alpha)|G|/4$
\end{itemize}
\end{thm}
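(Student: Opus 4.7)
The goal is to exhibit, for each $\alpha>0$, an infinite family of graphs $G=G_n$ of density bounded below by some $\beta=\beta(\alpha)>0$ which simultaneously satisfies $\tau_3(G)\ge(1-o(1))|G|/2$ and $\nu_3(G)<(1+\alpha)|G|/4$. The central tension is that the two inequalities pull in opposite directions: $\tau_3\approx|G|/2$ forces the triangles of $G$ to be numerous and well distributed across the edges (every triangle-free subgraph omits close to half the edges, so the extremal one is essentially a bipartition), while $\nu_3\lesssim|G|/4$ forces the triangles to overlap so heavily that no edge-disjoint packing covers more than about three quarters of the edges. Since the fractional triangle-packing number $\nu_3^*(G)$ trivially satisfies $\nu_3^*(G)\le|G|/3$ (sum the edge constraints in the LP), the packing bound is essentially equivalent to exhibiting an LP integrality gap of at least $4/3$ (as $\alpha\to 0$), which is the non-trivial content of the theorem.

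My plan is to take $G_n$ to be a pseudorandom object of constant edge density --- a suitable $G(n,p)$, a random subgraph of an algebraic host, or a carefully sparsified blow-up --- in which every edge lies in many triangles but the triangles are arranged so as to defeat edge-disjoint packings. The density bound $|G_n|\ge\beta\binom{n}{2}$ would be immediate by construction. For the lower bound $\tau_3(G_n)\ge(1-o(1))|G_n|/2$, I would apply a stability argument in the pseudorandom host: any triangle-free subgraph of $G_n$ retaining close to $|G_n|/2$ edges must be essentially bipartite (by Erd\H{o}s--Simonovits stability, or by the triangle removal lemma), after which a direct count of triangles of $G_n$ cut by any bipartition of $V(G_n)$ closes the gap and forces the bound.

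The main obstacle is the upper bound $\nu_3(G_n)<(1+\alpha)|G_n|/4$. To force this, I would analyze an arbitrary near-maximum edge-disjoint triangle packing $\mathcal T$ and use local triangle statistics of $G_n$ (degrees, codegrees, and the number of triangles through each edge) to show that $|\mathcal T|>(1+\alpha)|G_n|/4$ would compel $\mathcal T$ to contain configurations forbidden by the construction. The hard part is that generic pseudorandom or density arguments yield only $\nu_3\approx\nu_3^*\approx|G|/3$, which is too large; closing the factor-$4/3$ gap requires a genuine structural obstruction in $G_n$. My best guess is to engineer $G_n$ with a distinguished ``bottleneck'' edge set whose complement is (almost) triangle-decomposable, in such a way that every triangle must consume edges from both pieces and that edge-disjoint packings are therefore bounded in terms of the bottleneck. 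Calibrating the construction so that this obstruction is compatible with both the density requirement and the cover bound $\tau_3\approx|G|/2$ --- which, by contrast, wants triangles to be spread everywhere --- is the heart of the argument, and I would expect the proof to proceed by a careful random construction analyzed via concentration and second-moment calculations.
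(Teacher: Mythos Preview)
What you have written is a proof plan, not a proof, and the plan has a genuine gap: you never specify a construction, and without one none of your intended arguments can be carried out. You correctly diagnose the difficulty---generic pseudorandom graphs satisfy $\nu_3(G)\sim |G|/3$, so something structural is required to force $\nu_3(G)\lesssim |G|/4$---but your ``bottleneck'' idea remains a guess. The paper's construction is quite specific: take a triangle-free, $d$-regular pseudorandom graph $H$ (Alon's graphs), form the ``double'' $K=K_2\cdot H$ (two copies of $H$ joined by a complete bipartite graph), blow each vertex up to a clique of size $a$, and then keep each non-clique edge independently with a carefully chosen probability depending on whether it comes from an $H$-edge or a bipartite edge. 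With this construction the $\nu_3$ bound is in fact the \emph{easy} part: since $H$ is triangle-free, every triangle of $G$ either contains a clique edge or at least two ``external'' (bipartite) edges, so putting weight $1$ on clique edges and weight $1/2$ on external edges gives a fractional triangle edge cover of total weight $(1+\alpha/2+o(1))|G|/4$, whence $\nu_3\le\nu_3^*=\tau_3^*<(1+\alpha)|G|/4$.

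You also misjudge where the real work lies. Once the construction is fixed, it is $\tau_3(G)>(1-o(1))|G|/2$ that is hard, and your proposed route via Erd\H{o}s--Simonovits stability is not what the paper does and is unlikely to suffice here: $G$ is not close to bipartite (the cliques sit inside it), and a largest triangle-free subgraph need not come from a global bipartition of $V(G)$. The paper instead applies the Regularity Lemma to an arbitrary triangle-free $F\subseteq G$, massages the reduced graph by neighborhood-switching into what it calls a \emph{configuration} on $K$, and then invokes a technical ``Main Lemma'' asserting that $H$ is \emph{$c$-fair}: no configuration can capture more than half the appropriately weighted mass of $K$. Proving $c$-fairness for Alon's graphs is the heart of the paper and uses the cycle space of $H$, the Expander Mixing Lemma, a version of Mantel's theorem for ``crossing'' triangles, and a spectral positivity argument. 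None of this is suggested by your outline.
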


\noindent (limits as $n \rightarrow \infty$). Thus even for dense graphs---and moreover for dense graphs where $\tau_3(G)$ is near $|G|/2$---Tuza's conjecture is essentially best possible.

Since what follows is not entirely easy, a little orientation may be helpful. Our construction itself is not very difficult; in rough outline it does:
\begin{enumerate}
\item start with a triangle-free graph $H$ with certain nice degree and eigenvalue properties (we use the well-known graphs described by Noga Alon in \cite{AlonGraphs}---see Proposition \ref{prop:AlonGraphs});

\item join two disjoint copies of $H$ by a complete bipartite graph to produce $K$;

\item replace each vertex of $K$ by a \emph{large} clique; and finally

\item take a suitable random subgraph of this blowup, yielding the graph $G_a$ found in the third paragraph of Section \ref{sec:theorem}.
\end{enumerate}
So again, there is nothing very exotic here. What seems most interesting in what follows is how strange a route we needed to take to arrive at a proof that this relatively simple construction actually works.

Also interesting is whether one could simplify our argument (or give an easier example) if the goal were only to disprove the stronger Conjecture \ref{conj:StrongYuster} (rather than \eqref{conj:WeakYuster}). We don't see how to do this, and in fact most of what follows was originally developed with the lesser goal in mind.

The rest of the paper is organized as follows. The next section covers preliminary business: standard \hyperref[subsec:Usage]{notation and terminology}; a few preliminary results, including some \hyperref[subsec:knownPrelims]{previously known} and one \hyperref[subsec:newPrelims]{new}; and a long string of essential \hyperref[subsec:NewDefs]{definitions} leading up to the crucial Lemma \ref{lem:MAINLEMMA}, which we call our \hyperref[lem:MAINLEMMA]{main lemma}. In Section \ref{sec:theorem} we prove our \hyperref[thm:MAINTHEOREM]{main theorem}, assuming the  \hyperref[lem:MAINLEMMA]{main lemma}. In Section \ref{sec:lemma}, we prove the \hyperref[lem:MAINLEMMA]{main lemma}.

\section{Preliminaries}
\label{sec:preliminaries}

\subsection{Usage}
\label{subsec:Usage}

Given a graph $G$ and $v \in V(G)$, $N(v)$ is the neighborhood of $v$ in $G$, and $d(v) = |N(v)|$ is the degree of $v$. For a subgraph $H$ of $G$, $N_H(v)=\{x \in V(G) \mid xv \in H\}$ is the set of $H$-neighbors of $v$, and $d_H(v)=|N_H(v)|$. For disjoint $A,B \subseteq V(G)$, $\nabla(A,B)$ is the set of edges with one endpoint in $A$ and one in $B$, and $\nabla_H(A,B)$ is $\nabla(A,B) \cap E(H)$. Also, $G[A]$ is the subgraph of $G$ induced by $A$.

For $x,y\in V(G)$, the \emph{distance} between $x$ and $y$ is the number of edges in a shortest path from $x$ to $y$. The \emph{diameter} of $G$ is the maximum distance between a pair of vertices of $G$.

The \emph{edge space} of $G$, denoted $\mathcal{E}(G)$, is the set of binary vectors indexed by the edges of $G$, viewed as a vector space over $\mathbb{F}_2$. The \emph{cycle space} of $G$, denoted $\mathcal{C}(G)$, is the subspace of $\mathcal{E}(G)$ generated by the (indicators of) cycles of $G$. The orthogonal complement $\mathcal{C}^\perp(G)$ of $\mathcal{C}(G)$, called the \emph{cut space} of $G$, is exactly the set of (indicators of) \emph{cuts} $\nabla(A, V(G) \setminus A)$ of $G$ (see e.g. \cite[Sec. 1.9]{Diestel} for an exposition).

A \emph{fractional triangle edge cover} of $G$ is an assignment of nonnegative weights to the edges of $G$ such that the weight of each triangle (this being the sum of the weights of its edges) is at least 1.  We denote by $\tau_3^*(G)$ the minimum total weight of such a cover. Dually, a \emph{fractional triangle packing} of $G$ is an assignment of nonnegative weights to the triangles of $G$ such that the weight of each edge (the sum of the weights of the triangles containing it) is at most 1. We denote by $\nu_3^*(G)$ the maximum total weight of such a packing. We have $\nu_3(G) \leq \nu_3^*(G) = \tau_3^*(G) \leq \tau_3(G)$, where the inequalities are trivial and the equality is by linear programming duality.

Given graphs $G_1,G_2$, the \emph{lexicographic product} $G_1 \cdot G_2$ is the graph on vertex set $V(G_1) \times V(G_2)$ where $(u_1,u_2)$ is adjacent to $(v_1,v_2)$ iff either $u_1v_1 \in G_1$, or $u_1=v_1$ and $u_2v_2 \in G_2$. Note that the lexicographic product is not commutative.

As usual, the eigenvalues of a graph are those of its adjacency matrix; see e.g.\ \cite[Sec.\ VIII.2]{Bol}.

In the context of an asymptotic probabilistic argument, a statement holds \emph{with high probability} (\emph{w.h.p.}) if it holds with probability tending to 1 as some specified parameter tends to infinity.

The notation $X \backsim \text{Bin}(n,p)$ means $X$ is a random variable distributed according to a binomial distribution with $n$ independent Bernoulli trials of success probability $p$. The symbol $\backsim$ is not to be confused with $\sim$, which denotes asymptotic equality.

Finally, for a positive integer $n$, $[n]$ is the set $\{1,\ldots,n\}$.

\subsection{Known Preliminaries}
\label{subsec:knownPrelims}

Here we recall what we need in the way of standard tools.

\begin{lemma}[Expander Mixing Lemma {\cite[Cors. 9.2.5-6]{AS}}] 
\label{lem:expandermixing}
Let $H$ be a $d$-regular graph on $t$ vertices for which every eigenvalue except $d$ has absolute value at most $\lambda$. Let $A, B \subseteq V(H)$ be disjoint with $|A|=a, |B|=b$. Then
\begin{equation}
\left| |\nabla(A,B)| - \frac{abd}{t} \right| \leq \lambda \sqrt{ab}, \notag
\end{equation}
and
\begin{equation}
\left| |H[A]| - \frac{a^2d}{2t} \right| \leq \frac{\lambda a}{2}. \notag
\end{equation}
\end{lemma}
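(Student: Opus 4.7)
The plan is to use the standard spectral approach: work with the adjacency matrix $M$ of $H$, use the fact that $\mathbf{1}$ is an eigenvector for the eigenvalue $d$, and quantify the deviation of edge counts from their "random-graph" predictions via the gap between $d$ and the other eigenvalues.

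Concretely, I would first observe that for disjoint $A,B$, the quantity $|\nabla(A,B)|$ equals the bilinear form $\chi_A^{T} M \chi_B$, where $\chi_A,\chi_B\in\mathbb{R}^{V(H)}$ are the characteristic vectors. Writing the orthogonal decomposition
\[
\chi_A=\tfrac{a}{t}\mathbf{1}+\chi_A^{\perp},\qquad \chi_B=\tfrac{b}{t}\mathbf{1}+\chi_B^{\perp},
\]
with $\chi_A^{\perp},\chi_B^{\perp}\in\mathbf{1}^{\perp}$, the cross terms vanish because $M\mathbf{1}=d\mathbf{1}$ forces $\mathbf{1}^{T}M\chi_{B}^{\perp}=d\,\mathbf{1}^{T}\chi_{B}^{\perp}=0$ (and similarly on the other side). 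Thus
\[
\chi_A^{T}M\chi_B=\tfrac{ab}{t^{2}}\,\mathbf{1}^{T}M\mathbf{1}+(\chi_A^{\perp})^{T}M\chi_B^{\perp}=\tfrac{abd}{t}+(\chi_A^{\perp})^{T}M\chi_B^{\perp}.
\]

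The main step is then to control the residual $(\chi_A^{\perp})^{T}M\chi_B^{\perp}$. Since $M$ is symmetric, the spectral theorem gives an orthonormal eigenbasis, and by hypothesis all eigenvalues on $\mathbf{1}^{\perp}$ have absolute value at most $\lambda$; hence the operator norm of $M$ restricted to $\mathbf{1}^{\perp}$ is at most $\lambda$, yielding
\[
\bigl|(\chi_A^{\perp})^{T}M\chi_B^{\perp}\bigr|\le\lambda\,\|\chi_A^{\perp}\|\,\|\chi_B^{\perp}\|.
\]
A direct computation gives $\|\chi_A^{\perp}\|^{2}=a-a^{2}/t\le a$ and likewise $\|\chi_B^{\perp}\|^{2}\le b$, so the residual is bounded by $\lambda\sqrt{ab}$. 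This delivers the first inequality.

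For the second inequality I would apply the same decomposition with $B=A$, using the identity $\chi_A^{T}M\chi_A=2|H[A]|$ (the diagonal of $M$ is zero, so every edge in $H[A]$ is counted twice). The same calculation produces
\[
2|H[A]|=\tfrac{a^{2}d}{t}+(\chi_A^{\perp})^{T}M\chi_A^{\perp},
\]
and the residual is now bounded by $\lambda\|\chi_A^{\perp}\|^{2}\le\lambda a$, which after dividing by $2$ gives the stated bound $\lambda a/2$. Everything here is elementary once the spectral gap is in hand; the only substantive point is the norm bound on $M|_{\mathbf{1}^{\perp}}$, which is the content of the spectral hypothesis.
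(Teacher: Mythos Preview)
Your argument is the standard spectral proof and is correct. Note, however, that the paper does not actually give its own proof of this lemma: it is stated in the ``Known Preliminaries'' subsection with a citation to Alon--Spencer (Corollaries 9.2.5--6), so there is no in-paper proof to compare against. Your write-up is exactly the argument one finds in that reference.
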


We will use the Chernoff bound in the following form.

\begin{thm}[{\cite[Thm. 2.1]{JLR}}]
\label{thm:Chernoff}
If $X \backsim \mathrm{Bin}(n,p)$, $\mu=np=\mathbb{E}[X]$ and $x \geq 0$, then
\begin{equation*}
\mathbb{P}(X \geq \mu +x) \leq \exp\left(-\frac{x^2}{2(\mu + x/3)}\right)
\end{equation*}
and
\begin{equation*}
\mathbb{P}(X \leq \mu -x) \leq \exp\left(-\frac{x^2}{2\mu}\right).
\end{equation*}
\end{thm}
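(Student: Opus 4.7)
The plan is to use the classical Cram\'er--Chernoff moment generating function method. Writing $X = X_1 + \cdots + X_n$ with $X_i$ i.i.d.\ Bernoulli($p$), I would start from Markov's inequality in the form
\[
\mathbb{P}(X \geq \mu + x) \;=\; \mathbb{P}(e^{tX} \geq e^{t(\mu+x)}) \;\leq\; e^{-t(\mu+x)}\, \mathbb{E}[e^{tX}] \;=\; e^{-t(\mu+x)} (1-p+pe^t)^n
\]
for any $t \geq 0$, and dually from $\mathbb{P}(X \leq \mu - x) \leq e^{t(\mu - x)}(1 - p + pe^{-t})^n$ for $t \geq 0$ on the lower tail.

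The next step is to optimize over $t$. Using $1-p+pe^t \leq \exp(p(e^t-1))$ and choosing $t = \log(1+\delta)$ with $\delta = x/\mu$ for the upper tail gives the standard bound
\[
\mathbb{P}(X \geq (1+\delta)\mu) \;\leq\; \exp(-\mu\,\phi(\delta)), \qquad \phi(\delta) := (1+\delta)\log(1+\delta) - \delta.
\]
Similarly, choosing $t = -\log(1-\delta)$ with $\delta = x/\mu \in [0,1]$ on the lower tail gives
\[
\mathbb{P}(X \leq (1-\delta)\mu) \;\leq\; \exp(-\mu\,\psi(\delta)), \qquad \psi(\delta) := (1-\delta)\log(1-\delta) + \delta.
\]
These are the exact Kullback--Leibler bounds; the only remaining work is to convert them into the two clean Gaussian-type forms in the statement.

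The main obstacle, then, is the analytic step: establishing the two elementary calculus inequalities
\[
\phi(\delta) \;\geq\; \frac{\delta^2}{2 + 2\delta/3} \quad (\delta \geq 0), \qquad \psi(\delta) \;\geq\; \frac{\delta^2}{2} \quad (\delta \in [0,1]).
\]
Both can be proved by expanding $\phi$ and $\psi$ in power series around $0$ (so $\phi(\delta) = \sum_{k\geq 2}(-1)^{k}\delta^k/(k(k-1))$ and similarly for $\psi$) and comparing term by term, or equivalently by showing that the corresponding difference functions vanish at $0$ with nonnegative derivative throughout the relevant range. Granting these, substituting $\delta = x/\mu$ converts $\mu\,\phi(x/\mu)$ into $x^2/(2(\mu+x/3))$ and $\mu\,\psi(x/\mu)$ into $x^2/(2\mu)$, which yields the two stated inequalities.
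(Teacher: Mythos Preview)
The paper does not give its own proof of this theorem; it is quoted from \cite[Thm.~2.1]{JLR} as a known preliminary. Your outline is correct and is precisely the standard Cram\'er--Chernoff argument that appears in that reference, so there is nothing to compare. One small remark: for the lower tail you restrict to $\delta = x/\mu \in [0,1]$, whereas the statement allows all $x \geq 0$; but for $x > \mu$ the probability is zero and the bound holds trivially, so this is harmless.
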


Regarding the cycle space of a graph we need the following simple observations.

\begin{prop}[{\cite[Prop. 1.9.1]{Diestel}}]
For any graph $G$, $\mathcal{C}(G)$ is generated by the \emph{induced} cycles of $G$.
\end{prop}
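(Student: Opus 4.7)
The plan is to show that every (simple) cycle lies in the $\mathbb{F}_2$-span of the induced cycles; since by definition $\mathcal{C}(G)$ is generated by all cycles, this suffices. I would proceed by induction on the length $\ell$ of the cycle.

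For the base case, the shortest possible cycle is a triangle, and every triangle is automatically induced, so there is nothing to prove. For the inductive step, fix a cycle $C$ of length $\ell \geq 4$ and assume the claim for every cycle of length less than $\ell$. If $C$ is already induced, we are done. Otherwise $C$ has a chord, that is, an edge $e = xy \in E(G) \setminus E(C)$ with $x,y \in V(C)$. The vertex $x$ and $y$ split the vertex sequence of $C$ into two internally vertex-disjoint $xy$-paths $P_1$ and $P_2$; adjoining $e$ to each gives two cycles $C_1 = P_1 + e$ and $C_2 = P_2 + e$, each of length strictly less than $\ell$. Viewed as $\mathbb{F}_2$-vectors in $\mathcal{E}(G)$, the chord $e$ appears in both $C_1$ and $C_2$ and hence cancels in the sum, so $C_1 + C_2 = C$. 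By the inductive hypothesis, each $C_i$ is an $\mathbb{F}_2$-sum of induced cycles, and therefore so is $C$.

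I do not anticipate a real obstacle here; the only thing to be careful about is the verification that $C_1 + C_2 = C$ in $\mathcal{E}(G)$, which amounts to checking that the chord $e$ is the unique edge common to $C_1$ and $C_2$ (this uses that $P_1$ and $P_2$ share only their endpoints). Everything else is straightforward bookkeeping, and the induction terminates because each chord strictly reduces cycle length.
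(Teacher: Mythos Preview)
Your argument is correct; it is the standard chord-splitting induction and is exactly the proof given in Diestel. Note that the paper itself does not prove this proposition at all---it merely cites it from \cite[Prop.~1.9.1]{Diestel}---so there is no paper-specific proof to compare against.
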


\begin{cor}
\label{cor:cycleSpaceDiameter}
For any graph $G$ of diameter $D$, $\mathcal{C}(G)$ is generated by the cycles of $G$ of length up to $2D+1$.
\end{cor}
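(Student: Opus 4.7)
The plan is to reduce to the preceding proposition by proving the slightly stronger statement that every cycle $C$ in $G$ can be written in $\mathcal{C}(G)$ as a sum of cycles of length at most $2D+1$, by strong induction on $|C|$. Since the proposition already reduces generation of $\mathcal{C}(G)$ to induced cycles (which are, in particular, cycles), this suffices.

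The base case $|C|\leq 2D+1$ is trivial. For the inductive step, assume $|C|\geq 2D+2$. I would choose two vertices $x,y\in V(C)$ so that one of the two arcs of $C$ between them, call it $C_1$, has length exactly $D+1$; the other arc $C_2$ then has length $|C|-(D+1)\geq D+1$. Invoking the diameter bound, I take any $x$-$y$ path $P$ in $G$ with $|P|\leq D$ and exploit the identity $C=(C_1+P)+(C_2+P)$ in the $\mathbb{F}_2$ edge space (using $P+P=0$). Both summands have even degree at every vertex, so both lie in $\mathcal{C}(G)$.

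Now $C_1+P$ uses at most $(D+1)+D=2D+1$ edges, so as an element of $\mathcal{C}(G)$ it decomposes as an edge-disjoint sum of cycles, each of length at most $2D+1$. The element $C_2+P$ uses at most $(|C|-D-1)+D=|C|-1$ edges; decomposing it into edge-disjoint cycles yields cycles of length strictly less than $|C|$, and each of these is a sum of cycles of length $\leq 2D+1$ by the inductive hypothesis. Concatenating the two decompositions expresses $C$ as a sum of cycles of length $\leq 2D+1$, closing the induction.

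The only point requiring care is the choice of $x$ and $y$: making the first arc have length exactly $D+1$ keeps $C_1+P$ inside the $2D+1$ budget while still leaving $C_2+P$ strictly shorter than $C$, and this balance is available precisely when $|C|\geq 2D+2$. Beyond that, the argument is routine manipulation in the cycle space, so I do not anticipate any substantive obstacle.
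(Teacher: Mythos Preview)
Your argument is correct, and it differs from the paper's one-line proof, which simply combines the preceding proposition with the assertion that every induced cycle in a diameter-$D$ graph has length at most $2D+1$. That assertion is actually false: the wheel $W_n$ (an $n$-cycle together with one universal vertex) has diameter $2$, yet its rim is a chordless $n$-cycle, which exceeds $2D+1=5$ as soon as $n\ge 6$. Your inductive path-splitting avoids this pitfall and yields a self-contained proof; it is essentially the standard argument that \emph{isometric} cycles (those in which cycle distance coincides with graph distance, and which therefore genuinely have length at most $2D+1$) generate $\mathcal{C}(G)$---presumably the fact the paper had in mind. The only superfluous step in your write-up is the appeal to the preceding proposition: since $\mathcal{C}(G)$ is by definition generated by all cycles, you need not route through induced cycles at all.
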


\begin{proof}
Every induced cycle of $G$ has length at most $2D+1$.
\end{proof}

Finally, we will need Szemer\'edi's Regularity Lemma \cite{SzemerediOrig}, or, more precisely, a generalization thereof due to Kohayakawa \cite{Koha} and R\"odl (unpublished). Our presentation here follows \cite[Sec. 8.3]{JLR}.

\begin{defins}[for the Regularity Lemma]
Given a graph $H$, a real number $s \in (0,1]$ (called a \emph{scaling factor}), and disjoint $U,W \subseteq V(H) =: V$, the \emph{$(s;H)$-density} $d_{s,H}(U,W)$ between $U$ and $W$ is \[ d_{s,H}(U,W) = \frac{|\nabla_H(U,W)|}{s|U||W|}.\] For $\epsilon>0$, the pair $U,W$ is called \emph{$(s;H,\epsilon)$-regular} if for all $U' \subseteq U$ and $W' \subseteq W$ with $|U'| \geq \epsilon|U|$ and $|W'| \geq \epsilon|W|$ we have \[ |d_{s,H}(U,W) - d_{s,H}(U',W')| \leq \epsilon.\] A partition $\Pi = (V_0, V_1, \ldots, V_k)$ of $V$ is called \emph{$(\epsilon,k)$-equitable} if $|V_1| = |V_2| = \cdots = |V_k|$ and $|V_0| \leq \epsilon|V|$, and it is called \emph{$(s;H,\epsilon,k)$-regular} if it is $(\epsilon,k)$-equitable and all but at most $\epsilon\binom{k}{2}$ of the pairs $V_i,V_j$ ($1 \leq i < j \leq k$) are $(s;H,\epsilon)$-regular. In such a partition, $V_0$ is called the \emph{exceptional part}. If $k'>k$ and $\Pi'$ is an $(\epsilon,k')$-equitable partition of $V$, then we say $\Pi'$ \emph{refines} $\Pi$ if every nonexceptional part of $\Pi'$ is contained in some nonexceptional part of $\Pi$.

For $b \geq 1$ and $\beta>0$, $H$ is called \emph{$(s;b,\beta)$-bounded} if whenever $U,W \subseteq V$ are disjoint with $|U|,|W| \geq \beta|V|$ we have $d_{s,H}(U,W) \leq b$. Intuitively, when $H$ is sparse and $s$ is the (tiny) density of $H$, $(s;b,\beta)$-boundedness ensures that no substantial chunk of $H$ is much denser than it should be.
\xqed{\lozenge}
\end{defins}

\begin{lemma}[Szemer\'edi Regularity Lemma, {\cite[Lem. 8.18]{JLR}}] 
\label{lem:RL}
For all $\epsilon>0, b \geq 1$ and natural numbers $m$ and $r$ there exist $\beta = \beta(\epsilon,b,m,r)>0$ and $M = M(\epsilon,b,m,r) \geq m$ such that the following holds. For every choice of scaling factors $s_i$ $(i \in [r])$ and $(s_i;b,\beta)$-bounded graphs $H_i$ $(i \in [r])$ on the same vertex set $V$ with $|V| \geq m$, there exists $k \in [m,M]$ and a partition $\Pi$ of $V$ that is $(s_i;H_i,\epsilon,k)$-regular for all $i \in [r]$.
\end{lemma}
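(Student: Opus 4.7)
The plan is to adapt the classical energy-increment proof of Szemer\'edi's Regularity Lemma to the scaled setting of this lemma, with the boundedness hypothesis playing the role of the trivial density bound $d \leq 1$ available in the dense case.

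I would first define the scaled index of a partition $\Pi = (V_0, V_1, \dots, V_k)$ relative to a pair $(s, H)$ by
\[ \mathrm{ind}_{s,H}(\Pi) \;=\; \sum_{1 \leq i < j \leq k} \frac{|V_i|\,|V_j|}{|V|^2}\, d_{s,H}(V_i,V_j)^2, \]
and observe that $(s;b,\beta)$-boundedness of $H$ forces $\mathrm{ind}_{s,H}(\Pi) \leq b^2/2 + o(1)$ as long as every nonexceptional part has size at least $\beta|V|$. This dictates that $\beta$ be chosen small relative to the eventual upper bound on $k$. Next I would establish the scaled analog of the defect Cauchy--Schwarz step: if a pair $(V_i, V_j)$ fails $(s; H, \epsilon)$-regularity, with $U' \subseteq V_i$, $W' \subseteq V_j$ witnessing $|d_{s,H}(V_i, V_j) - d_{s,H}(U', W')| > \epsilon$, then refining the pair according to $\{U', V_i \setminus U', W', V_j \setminus W'\}$ boosts the pair's contribution to the index by at least $\epsilon^4 |V_i|\,|V_j|/|V|^2$. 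Summing over the $\geq \epsilon \binom{k}{2}$ irregular pairs that must exist whenever $\Pi$ fails $(s;H,\epsilon,k)$-regularity then gives a global index gain of at least $\epsilon^5/4$.

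The argument proceeds by iteration. Beginning with an arbitrary $(\epsilon, m)$-equitable partition, at each step I would take a common refinement that simultaneously witnesses the defect in every $(s_i; H_i)$ for which the current partition is not regular, cutting each nonexceptional pair into at most $4^r$ sub-pairs and sweeping any tiny remnants into $V_0$. The total index $\sum_{i=1}^r \mathrm{ind}_{s_i, H_i}(\Pi)$ is bounded above by $rb^2/2 + o(1)$, so at most $O(rb^2 / \epsilon^5)$ refinement steps occur, yielding an \emph{a priori} bound $M = M(\epsilon, b, m, r)$ on the final number of parts. One then chooses $\beta \leq \epsilon/M$ so that every nonexceptional part ever produced has size at least $\beta|V|$, retroactively justifying the index bound throughout the iteration.

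The main obstacle is the standard parameter-juggling of sparse regularity: $M$ must be determined before $\beta$ can be set to ensure boundedness applies to every part that appears, yet the bound on $M$ comes from the energy argument, which in turn requires boundedness. This circularity is resolved by observing that the iteration count depends only on $b$, $\epsilon$, and $r$ (via the universal energy ceiling $rb^2/2$), so $M$ can be bounded first, and $\beta$ chosen afterward. The remaining technical bookkeeping---tracking the exceptional part across up to $O(rb^2/\epsilon^5)$ refinement rounds so that $|V_0| \leq \epsilon|V|$ is preserved, and arranging an equitable split of each nonexceptional part into equal blocks after refinement---is routine but accounts for the bulk of the detailed work.
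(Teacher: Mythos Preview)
The paper does not supply a proof of this lemma at all: it is quoted verbatim as a known result from \cite[Lem.~8.18]{JLR} (the Kohayakawa--R\"odl sparse regularity lemma), and the surrounding text only adds the remarks (\hyperref[commenti]{i}) and (\hyperref[commentii]{ii}) about refining a prescribed starting partition. So there is no in-paper argument to compare your proposal against.

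That said, your sketch is the standard energy-increment proof and is essentially correct in outline. The key points---defining a scaled index, using $(s;b,\beta)$-boundedness to cap the index by roughly $b^2/2$, the $\epsilon^4$ defect inequality for irregular pairs, and the observation that the number of iterations depends only on $\epsilon,b,r$ so that $M$ can be fixed before $\beta$---are all right, and match the proof in \cite{Koha} and \cite{JLR}. One small caution: your claim that the index is bounded by $b^2/2 + o(1)$ ``as long as every nonexceptional part has size at least $\beta|V|$'' is exactly what makes the circularity delicate, since at intermediate stages of refinement the parts may already be smaller than the $\beta$ you eventually choose; the clean way out (which you gesture at) is to bound the iteration count purely in terms of $\epsilon,b,r$ first, derive $M$, and only then set $\beta$ small enough that every part ever produced during the iteration has size at least $\beta|V|$. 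With that ordering made explicit, the argument goes through.
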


Since the proof of the Regularity Lemma starts with any partition of $V$ into $m$ nonexceptional parts of size $\lfloor |V|/m \rfloor$ and repeatedly refines this partition so that at each step each part is broken into the same number of subparts (see e.g.\ \cite{Koha,SparseRegLemma} for details), we may further assume that 

\begin{itemize}
\item[(i)] \phantomsection \label{commenti} $\Pi$ refines a specified partition of $V$ with $m$ nonexceptional parts of size $\lfloor |V|/m \rfloor$, and
\item[(ii)] \phantomsection \label{commentii} For any two nonexceptional parts $S_i,S_j$ of the starting partition we have $|V_0 \cap S_i| = |V_0 \cap S_j|$, where $V_0$ is the exceptional part of $\Pi$.
\end{itemize}

Observe also that since every graph is trivially $(1;1,\beta)$-bounded for all $\beta$, taking $b=r=s_1=1$ in Lemma~\ref{lem:RL} recovers the usual Regularity Lemma. This is all we will need for our \hyperref[thm:MAINTHEOREM]{main theorem}, but the proof of our \hyperref[lem:MAINLEMMA]{main lemma} will require the full generality of Lemma~\ref{lem:RL}.

Associated with the Regularity Lemma is the so-called Counting Lemma, which we will use in the following unusual form.

\begin{lemma}[Counting Lemma]
\label{lem:CL}
Let $H$ be a graph, $\epsilon \in (0, 1/2)$, $s \in (0,1]$, and $A, B, B'$ pairwise disjoint subsets of $V(H)$ each of size $l$. If the pairs $A,B$ and $A,B'$ are $(1;H,\epsilon)$-regular with $(1;H)$-density at least $2\epsilon$, and the pair $B,B'$ is $(s;H,\epsilon)$-regular with $(s;H)$-density at least $2\epsilon$, then $H$ contains a triangle $abb'$ with $a\in A$, $b \in B$, $b' \in B'$.
\end{lemma}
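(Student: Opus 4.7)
The plan is to run the standard counting-lemma argument but carefully book-keep the two different scales. The rough picture: from the two scale-$1$ regular pairs $(A,B)$ and $(A,B')$, a typical $a\in A$ inherits large neighborhoods in both $B$ and $B'$; restricting the scale-$s$ regular pair $(B,B')$ to these neighborhoods still leaves positive $(s;H)$-density, hence at least one edge, which closes the triangle at $a$.

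More precisely, let $d_1=d_{1,H}(A,B)$ and $d_2=d_{1,H}(A,B')$, so $d_1,d_2\geq 2\epsilon$ by hypothesis. I would first prove the standard ``most vertices have nearly average degree'' fact: the set
\[
A_1=\{a\in A:\ |N_H(a)\cap B|<(d_1-\epsilon)|B|\}
\]
satisfies $|A_1|<\epsilon|A|$, since otherwise $A_1,B$ would be a witness against $(1;H,\epsilon)$-regularity of $(A,B)$ (the edges from $A_1$ to $B$ would number less than $|A_1|(d_1-\epsilon)|B|$, making $d_{1,H}(A_1,B)<d_1-\epsilon$). The symmetric argument applied to $(A,B')$ produces $A_2\subseteq A$ with $|A_2|<\epsilon|A|$ outside of which $|N_H(a)\cap B'|\geq(d_2-\epsilon)|B'|$. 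Because $\epsilon<1/2$, the union $A_1\cup A_2$ is a proper subset of $A$, so we may pick $a\in A\setminus(A_1\cup A_2)$.

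Set $B_a=N_H(a)\cap B$ and $B'_a=N_H(a)\cap B'$. Then
\[
|B_a|\geq(d_1-\epsilon)|B|\geq\epsilon|B|,\qquad |B'_a|\geq(d_2-\epsilon)|B'|\geq\epsilon|B'|,
\]
so $B_a$ and $B'_a$ are admissible as $U',W'$ in the definition of $(s;H,\epsilon)$-regularity of $(B,B')$. Consequently
\[
d_{s,H}(B_a,B'_a)\geq d_{s,H}(B,B')-\epsilon\geq 2\epsilon-\epsilon=\epsilon>0,
\]
which means $\nabla_H(B_a,B'_a)$ is nonempty. Any edge $bb'$ in it, together with $a$, gives the desired triangle.

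Nothing here is subtle in the calculations; the only thing to be careful about is that the scale $s$ in the definition of $d_{s,H}$ cancels out of the regularity comparison $|d_{s,H}(B_a,B'_a)-d_{s,H}(B,B')|\leq\epsilon$, so the positivity of $d_{s,H}(B,B')$ suffices to produce an actual edge regardless of how small $s$ is. This is precisely what makes the lemma ``unusual'': the $B$-$B'$ side can be extremely sparse in $H$, yet its positive density at scale $s$, together with the size $\geq\epsilon\ell$ of the restricted neighborhoods, is enough.
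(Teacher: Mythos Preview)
Your proof is correct and is essentially the same as the paper's: both isolate a vertex $a\in A$ with $|N(a)\cap B|,|N(a)\cap B'|\geq \epsilon l$ via the regularity of $(A,B)$ and $(A,B')$, then use the $(s;H,\epsilon)$-regularity of $(B,B')$ to find an edge between these neighborhoods. The only cosmetic difference is that you track the sharper threshold $(d_i-\epsilon)l$ rather than the paper's $\epsilon l$, which is harmless since $d_i\geq 2\epsilon$.
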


\begin{proof}
Since $d_{1,H}(A,B) \geq 2\epsilon$, we have $|\{ a \in A \mid |\nabla(a,B)|< \epsilon l \}| < \epsilon l$, or else this subset of $A$, along with $B \subseteq B$, would violate the $(1;H,\epsilon)$-regularity of the pair $A,B$. Similarly $|\{ a \in A \mid |\nabla(a,B')|< \epsilon l \}| < \epsilon l$. Thus since $\epsilon < 1/2$, there exists $a \in A$ satisfying $|N(a) \cap B|,|N(a)\cap B'| \geq \epsilon l$. Then since the pair $B,B'$ is $(s;H,\epsilon)$-regular with $(s;H)$-density at least $2\epsilon$, we have $\nabla(N(a)\cap B, N(a)\cap B') \neq \varnothing$, yielding a triangle in $H$ of the stated form.
\end{proof}

\subsection{A New Version of Mantel's Theorem}
\label{subsec:newPrelims}

Finally, we will need the following strengthening of Mantel's Theorem \cite{Mantel}, which may be of independent interest. Recall that Mantel's Theorem is the first case of Tur\'an's Theorem (\cite{Turan}, or e.g.\ \cite[Thm 7.1.1]{Diestel}) and the first result in extremal graph theory, proved in 1907.

\begin{lemma}[Mantel's Theorem for ``Crossing Triangles'']
\label{lem:externalMantel}
Let $K$ be the complete graph on $X \cup Y$, where $X$ and $Y$ are disjoint sets of size $n$. Let $F$ be a subgraph of $K$ containing no (``crossing'') triangles meeting both $X$ and $Y$. Then $|F|\leq n^2$.
\end{lemma}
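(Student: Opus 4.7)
My plan is to apply a Zykov-style symmetrization within each side of the bipartition to reduce $F$ to a structured ``blow-up'' form, and then bound $|F|$ by a short AM-GM calculation. Throughout I write $|F| = |F_X| + |F_Y| + |F_{XY}|$ for the within-$X$, within-$Y$, and bipartite parts, and $N_F(v)$ for the $F$-neighborhood of $v$.

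The symmetrization step I use is: while there exist distinct non-adjacent vertices $u, v$ on the same side with $N_F(u) \neq N_F(v)$, relabel so that $d_F(u) \geq d_F(v)$ and replace $v$'s neighborhood in $F$ by $N_F(u)$. This alters only edges at $v$ and weakly increases $|F|$ by $d_F(u) - d_F(v) \geq 0$. The main thing to verify---and the step I expect to require the most care---is that the move preserves the no-crossing-triangle hypothesis: any new triangle through $v$ has the form $\{v, a, b\}$ with $a, b \in N_F(u)$ and $ab \in F$, so $\{u, a, b\}$ is a triangle in the original $F$; since $u, v$ lie on the same side, this old triangle has the same ``side pattern'' as the new one, and would be crossing if the new one were, contradicting the hypothesis. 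A standard monovariant argument (with $|F|$ as primary potential and, say, the number of twin pairs as a secondary potential to handle equal-degree moves) then shows the process terminates.

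At the terminal configuration, every non-adjacent same-side pair shares an $F$-neighborhood, so $X$ partitions into twin classes $X_1, \ldots, X_p$ of sizes $x_i$ summing to $n$, and analogously $Y = Y_1 \sqcup \cdots \sqcup Y_q$ with $|Y_j| = y_j$ summing to $n$. Each twin class is $F$-independent, while distinct same-side classes have distinct neighborhoods and are therefore $F$-adjacent, so $F_X$ and $F_Y$ are complete multipartite on their respective class partitions. The bipartite part $F_{XY}$ is a blow-up of a bipartite graph $A$ on $\{X_i\} \cup \{Y_j\}$, and $A$ must be a matching: otherwise some $Y_j$ would be $A$-adjacent to $X_i \neq X_{i'}$, and any representatives $x \in X_i$, $x' \in X_{i'}$, $y \in Y_j$ would form a crossing triangle using the $F_X$-edge $xx'$ between classes. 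Using $|F_X| = \binom{n}{2} - \sum_i \binom{x_i}{2} = \tfrac{1}{2}(n^2 - \sum_i x_i^2)$ and similarly for $|F_Y|$,
\[
|F| = n^2 - \tfrac{1}{2}\Bigl(\sum_i x_i^2 + \sum_j y_j^2\Bigr) + \sum_{(i,j) \in A} x_i y_j,
\]
and since $A$ is a matching each $i$ and each $j$ appears in at most one pair of $A$, so AM-GM ($x_i y_j \leq \tfrac{1}{2}(x_i^2 + y_j^2)$) gives $\sum_{(i,j)\in A} x_i y_j \leq \tfrac{1}{2}(\sum_i x_i^2 + \sum_j y_j^2)$, yielding $|F| \leq n^2$.
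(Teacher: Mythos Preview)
Your proof is correct and takes essentially the same route as the paper's: Zykov-type symmetrization to make $F[X]$ and $F[Y]$ complete multipartite, followed by a short count on the bipartite part (you read off the matching structure and apply AM--GM, while the paper uses the rearrangement inequality to obtain the slightly sharper $|F|\le n^2-\tfrac12\sum_i(x_i-y_i)^2$). One small caveat: your proposed secondary potential ``number of twin pairs'' is not obviously monotone under equal-degree moves, since replacing $N_F(v)$ by $N_F(u)$ also perturbs the neighborhoods of every vertex in $N_F(u)\triangle N_F(v)$; the paper sidesteps this by arguing directly on a maximum-$|F|$ configuration (using the standard double-replacement when degrees tie), and you could simply do the same.
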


\begin{proof}
We first claim that for any largest $F$ containing no crossing triangles, $F[X]$ and $F[Y]$ are complete multipartite. For convenience set $G=F[X]$. If $G$ is not complete multipartite, then it has vertices $x,y,z$ satisfying $xy \in G$ and $xz, yz \notin G$. If $d_F(x) > d_F(z)$, then replacing $N_F(z)$ by $N_F(x)$ strictly increases $|F|$ without introducing forbidden triangles. Thus we may assume $d_F(z) \geq d_F(x)$, and similarly $d_F(z) \geq d_F(y)$. But then replacing both $N_F(x)$ and $N_F(y)$ by $N_F(z)$ strictly increases $|F|$ without introducing forbidden triangles. (This neighborhood-switching is a standard trick; see e.g.\ \cite[Thm 7.1.1]{Diestel}. We use it again below, in our proof of the \hyperref[thm:MAINTHEOREM]{main theorem}.)

So any largest $F$ is complete multipartite in $X$ with parts $X_1,X_2,\ldots,X_r$ of sizes $x_1\geq x_2\geq\cdots\geq x_r$, and in $Y$ with parts $Y_1,Y_2,\ldots,Y_r$ of sizes $y_1\geq y_2\geq\cdots\geq y_r$ (some of the $x_i$'s or $y_i$'s being $0$ if one of the partitions has more nonempty parts than the other). Since $F$ has no triangles meeting both $X$ and $Y$, for any $a \in X_i$ and $b \in Y_j$ we have \[ab \in F \quad \Longrightarrow \quad N_F(a) \cap Y \subseteq Y_j \:\text{ and }\: N_F(b) \cap X \subseteq X_i,\] so by the so-called rearrangement inequality we have
\begingroup
\allowdisplaybreaks
\begin{align*}
|F| &\leq \sum_{1\leq i<j\leq r}(x_i x_j + y_i y_j) + \sum_{i=1}^r x_i y_i \\
&= \frac{1}{2}\sum_{i=1}^r \left[x_i(n-x_i+y_i) + y_i(n-y_i+x_i)\right] \\
&= \frac{1}{2}\sum_{i=1}^r \left[n(x_i+y_i)-(x_i-y_i)^2\right] \\
&= n^2 - \frac{1}{2}\sum_{i=1}^r (x_i-y_i)^2. \qedhere
\end{align*}
\endgroup
\end{proof}

\subsection{New Definitions}
\label{subsec:NewDefs}

The following definitions are essential to our arguments.

\begin{defin}[double of a graph]
For a graph $H$, the \emph{double of $H$}, denoted $K_{H,H}$, is the graph $K_2 \cdot H$. To be explicit, this is the graph whose vertex set is $X \cup Y$, where $X$ and $Y$ are disjoint sets of size $|V(H)|$, and whose edges satisfy $K_{H,H}[X] \simeq K_{H,H}[Y] \simeq H$ and $\{xy \mid x\in X, y\in Y\} \subseteq E(K_{H,H})$. The sets $X$ and $Y$ (we will always use these names) are called the \emph{sides} of $K_{H,H}$. 
\end{defin}

Of course the notation $K_{H,H}$ is intended to suggest the notation $K_{t,t}$ for a complete bipartite graph. When the $H$ is understood, we will frequently abbreviate $K_{H,H}$ by $K$.

We denote by $E$ the copy of $K_2$ on vertex set $\{\btt,\stt\}$. Here $E$ is for ``edge,'' $\btt$ is for ``big,'' and $\stt$ is for ``small,'' for reasons that will now become clear.

\begin{defin}[compound vertex]
Let $G$ be a graph. Then \emph{$G$ on compound vertices}, denoted $G^+$, is the graph $G \cdot E$. This term is intended to be suggestive---we imagine $G^+$ as $G$ with each of its vertices $v$ replaced by a new compound structure with a big part $(v,\btt)$ and a small part $(v,\stt)$. We will always abbreviate, e.g.,\ $(v,\btt)$ by $v^\btt$. For a generic vertex of $G^+$ we write $v^\xtt$, $v^\ytt$, etc.,\ understanding $\xtt, \ytt \in \{ \btt, \stt\}$.
\end{defin}

\begin{defin}[edge types]
\label{def:edgetypes}
In the context of a given $K = K_{H,H}$, an edge $uw \in K$ is called \emph{internal} if $u$ and $w$ belong to the same side, and \emph{external} otherwise. Similarly, an edge $u^\xtt w^\ytt \in K^+$ with $u \neq w$ is \emph{internal} if $uw \in K$ is internal and \emph{external} if $uw \in K$ is external. An edge $v^\btt v^\stt \in K^+$ is called a \emph{vertex edge}.
\end{defin}

\begin{defin}[external triangles]
Let $H$ be a graph and $K=K_{H,H}$. A triangle in $K$ or $K^+$ is an \emph{external triangle} if it contains an external edge. A subgraph $F$ of $K$ or $K^+$ is \emph{external triangle free} (\emph{ETF}) if it contains no external triangles.
\end{defin}

\begin{defins}[configurations and weight]
Let $H$ be a graph with $t$ vertices and $m$ edges, and $K=K_{H,H}$. A \emph{configuration on $K$} is a pair $(F,\phi)$, where $F \subseteq E(K^+)$ and $\phi: V(K^+) \rightarrow [0,1]$ satisfy the following conditions. Viewing $F$ as a subgraph of $K^+$, $F$ is ETF, contains all vertex edges of $K^+$, and satisfies $N_F(v^\btt) \cap N_F(v^\stt) = \varnothing \; \forall \, v \in V(K)$; and $\phi$, which we call a \emph{mass function}, satisfies $\phi(v^\btt) \in [\frac{1}{2},1]$ and $\phi(v^\stt) = 1-\phi(v^\btt) \; \forall \, v \in V(K)$. Given a configuration and $c \in [0,1]$, the configuration's \emph{$c$-weight} is 
\begin{align}
\label{eq:defweight}
\wtt_c(F,\phi) = 
\frac{1-c}{4m}\sum\limits_{\substack{u^\xtt w^\ytt \in F \\ \text{internal}}} \phi(u^\xtt)\phi(w^\ytt) +
\frac{1-c}{2t^2}\sum\limits_{\substack{u^\xtt w^\ytt \in F \\ \text{external}}} \phi(u^\xtt)\phi(w^\ytt) + 
\frac{c}{t}\sum\limits_{v \in V(K)} \phi(v^\btt)\phi(v^\stt).
\end{align}
\end{defins}

Here's the idea behind $c$-weight. Given $H$, we think of the vertices and edges of $K$ as having weights attached, as follows. Each vertex weighs $\frac{c}{2t}$, each internal edge weighs $\frac{1-c}{4m}$, and each external edge weighs $\frac{1-c}{2t^2}$, for a total of unit weight on $K$. Passing to $K^+$, an adversary tries to maximize the amount of this weight he can capture in a configuration $(F, \phi)$. For each edge $uw \in K$, the fraction of that edge's weight that he captures is $\sum_{u^\xtt w^\ytt \in F} \phi(u^\xtt)\phi(w^\ytt)$, because we think of the weight of $uw \in K$ as being split among the four corresponding edges of $K^+$ with a $\phi(u^\xtt)\phi(w^\ytt)$-fraction residing in the edge $u^\xtt w^\ytt$. For each vertex $v \in V(K)$, the fraction of that vertex's weight that our adversary captures is $2\phi(v^\btt)\phi(v^\stt)$, because we think of the weight of a vertex in $K$ as being split up in $K^+$ analogously to the way the weight of an edge in $K$ is split up in $K^+$, with a $\phi(v^\btt)^2$-fraction of the weight of $v$ residing in $v^\btt$, a $\phi(v^\stt)^2$-fraction in $v^\stt$, and the remaining $2\phi(v^\btt)\phi(v^\stt)$-fraction in the vertex edge $v^\btt v^\stt$. This 2 cancels the $\frac{1}{2}$ in the vertex weight $\frac{c}{2t}$ to yield the coefficient of the third sum in \eqref{eq:defweight}. To see that the 2 is natural, observe that it lets our adversary capture exactly half the weight of every vertex and edge of $K$ by taking $F = \{u^\btt w^\stt \mid uw \in K \} \cup \{v^\btt v^\stt \mid v \in V(K) \}$ and $\phi \equiv \frac{1}{2}$. We call this the \emph{na\"ive configuration}.

\begin{defin}[fairness]
For $c \in [0,1]$, a graph $H$ is called \emph{$c$-fair} if
\begin{align}
\label{eq:deffairness}
\max\big(\wtt_c(F,\phi)\big) = 1/2,
\end{align}
where the max is over configurations $(F,\phi)$ on $K$.
\end{defin}

Observe that the $1/2$ in \eqref{eq:deffairness} is best possible, since the na\"ive configuration has $c$-weight $1/2$ for any $c$. This explains the term ``fair''---our adversary can't capture more than half the weight of $K$, the amount to which he is na\"ively entitled.

Observe also that increasing $c$ can only make life harder for our adversary. That is, if $H$ is $c$-fair, then it is $c'$-fair for any $c' \in [c,1]$. To see this, notice that $\wtt_c(F,\phi)$ is a convex combination of the nonnegative quantities 
\[ \frac{1}{2m}\sum\limits_{\substack{u^\xtt w^\ytt \in F \\ \text{internal}}} \phi(u^\xtt)\phi(w^\ytt), \qquad
\frac{1}{t^2}\sum\limits_{\substack{u^\xtt w^\ytt \in F \\ \text{external}}} \phi(u^\xtt)\phi(w^\ytt) \quad \text{ and } \quad
\frac{1}{t}\sum\limits_{v \in V(K)} \phi(v^\btt)\phi(v^\stt), \]
with coefficients $\frac{1-c}{2}$, $\frac{1-c}{2}$, $c$. Since the first two coefficients are decreasing in $c$ and the third quantity is at most $1/2$ (note each of the $2t$ terms in its sum is at most $1/4$), increasing $c$ cannot raise $\wtt_c(F,\phi)$ above $1/2$. At the extremes, it is easy to see that no graph is $0$-fair and every graph is $1$-fair. This, finally, motivates our \hyperref[lem:MAINLEMMA]{main lemma}.

\begin{lemma}[Main Lemma]
\label{lem:MAINLEMMA}
For any $c \in (0,1]$ and $N \in \mathbb{N}$, there exists a triangle-free, $d$-regular, $c$-fair graph $H$ with $d \geq N$.
\end{lemma}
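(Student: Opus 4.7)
I would take $H$ to be one of the Alon pseudorandom graphs alluded to in the introduction (Proposition \ref{prop:AlonGraphs}, \cite{AlonGraphs}): triangle-free, $d$-regular on $t$ vertices, with all non-principal eigenvalues of absolute value at most $\lambda = O(\sqrt d)$. Choose $d \geq N$ and large enough in terms of $c$ (to be quantified below). The goal is to show $\wtt_c(F,\phi) \leq 1/2$ for every configuration $(F,\phi)$ on $K = K_{H,H}$, which combined with the na\"{i}ve bound of $1/2$ gives $c$-fairness. Throughout, set $\alpha(v) = \phi(v^\btt) - \tfrac12 \in [0,\tfrac12]$.

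\textbf{Reduction to matching types.} The constraint $N_F(v^\btt) \cap N_F(v^\stt) = \varnothing$ forces, above each $uv \in E(K)$, that $F$'s lift-edges form a matching in the natural $K_{2,2}$: empty, a single lift, \emph{parallel} ($\{u^\btt v^\btt,u^\stt v^\stt\}$), or \emph{crossing} ($\{u^\btt v^\stt,u^\stt v^\btt\}$). Let $P \subseteq E(K)$ be the parallel edges. Three parallels on a triangle of $K$ would produce $u^\btt v^\btt w^\btt \subseteq F$, violating ETF; so $P$ contains no triangle of $K$. Since $H$ is triangle-free, every triangle of $K$ is crossing in the sense of Lemma \ref{lem:externalMantel}, and applying that lemma inside the complete graph on $V(K) = X\cup Y$ yields $|P| \leq t^2$. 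A more careful enumeration of the eight possible lift-triangles $u^\xtt v^\ytt w^\ztt$ shows the stronger \emph{triangle-cocycle condition}: for every triangle of $K$ with all three edges in $F$'s support, the number of $P$-edges among them must be even.

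\textbf{Weight analysis.} One computes $w(uv) := \sum_{u^\xtt v^\ytt \in F}\phi(u^\xtt)\phi(v^\ytt) \leq \tfrac12 + 2\alpha(u)\alpha(v)$, with equality iff $uv$ is parallel, and the vertex contribution is $\tfrac14 - \alpha(v)^2$. Collecting the terms of \eqref{eq:defweight} yields
\[
\wtt_c(F,\phi) \leq \tfrac12 + \tfrac{1-c}{2m}\sum_{uv \in P_{\mathrm{int}}}\alpha(u)\alpha(v) + \tfrac{1-c}{t^2}\sum_{uv \in P_{\mathrm{ext}}}\alpha(u)\alpha(v) - \tfrac{c}{t}\sum_v \alpha(v)^2,
\]
where $P_{\mathrm{int}}, P_{\mathrm{ext}}$ are the internal and external parts of $P$. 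The plan is to absorb both ``boost'' terms into the vertex penalty: the external boost via $|P_{\mathrm{ext}}| \leq t^2$ and Cauchy--Schwarz across $X$ and $Y$; the internal boost via the bound $a^\top A_{P_X} a \leq \sqrt{|P_X|}\,\|a\|_2^2$ available for any triangle-free graph, combined with expander mixing (Lemma \ref{lem:expandermixing}) to prevent $\alpha$ from concentrating on a small, dense part of $X$ or $Y$.

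\textbf{Main obstacle.} The real difficulty is that the adversary can attempt to concentrate $P$ in a dense regular block of $K$ where $\alpha$ is also concentrated, defeating the crude Mantel/spectral bounds. I would handle this by applying the Regularity Lemma (Lemma \ref{lem:RL}) simultaneously to $H_X$, $H_Y$, $K_{t,t}$, $F'$ (the projection of $F$ to $E(K)$), and $P$, with scaling factors matching their respective (possibly sparse) densities---pseudorandomness of $H$ supplies the required $(s;b,\beta)$-boundedness via Lemma \ref{lem:expandermixing}. The Counting Lemma (Lemma \ref{lem:CL}) then forbids any cluster triple supporting $P$-regular triangles of substantial density, and the triangle-cocycle condition combined with Corollary \ref{cor:cycleSpaceDiameter} (exploiting the small diameter of Alon graphs) promotes the local parity on triangles into a genuine cut structure on the cluster graph. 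A second application of Lemma \ref{lem:externalMantel} at the cluster level then gives a density bound on $P$ strong enough to absorb the boost terms into the vertex penalty for any fixed $c > 0$ and sufficiently large $d$, closing the proof.
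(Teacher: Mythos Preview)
Your ingredients---Alon graphs, the lift-type classification, the parity (cocycle) condition, Regularity, and the cycle-space promotion to a cut---all appear in the paper's proof. One local slip: the displayed inequality sums the boost only over $P$, but a single lift $u^\btt v^\btt$ (the paper's class~2) also carries positive gain, up to $2\alpha(u)\alpha(v)$, and is not in your $P$. The object that genuinely satisfies the cocycle condition on external triangles is $\Gamma=\{uv\in K: u^\btt v^\btt\in F\}$ (classes 1 and 2 together); see~\eqref{eq:GammaOrthoToExtTrianglesofG}. Replacing $P$ by $\Gamma$ patches this.

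The substantive gap is the endgame. A Mantel-type density bound on $P$ (or $\Gamma$) cannot absorb the boost into the vertex penalty when $c$ is small: take all external edges parallel, all internal edges crossing, and $\alpha\equiv\tfrac12$; then $|P_{\mathrm{ext}}|=t^2$, your external boost term is $(1-c)/4$, and the vertex penalty is only $c/2$, so your upper bound exceeds $\tfrac12$ whenever $c<\tfrac13$. What is lost in passing to your inequality is the \emph{negative} contribution of crossing edges, and recovering it is precisely what the paper's final step does. After establishing (Proposition~\ref{prop:ClaimD}) that $\Gamma$ is, outside a small exceptional set and a sparse error, an honest cut $\nabla_G(A,B)$ at the vertex level of $K$, one introduces the \emph{signed} vector $\varphi_v=\pm\delta_v$ according to side; then class~1 and class~3 gains both become $-2\varphi_u\varphi_v$, so the total edge gain is at most $-\varphi^\intercal N\varphi$ for the weighted adjacency matrix $N$ of $K$. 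The Alon eigenvalue bound gives $\lambda_{\min}(N)\ge -O(t^{-4/3})$, hence $N+(c/t)I\succeq 0$ for large $t$, and this positive-definiteness is what lets the $c$-vertex penalty dominate for \emph{every} $c>0$. You detect the cut structure but then extract only a density bound from it; without the signed quadratic-form analysis the argument cannot close for small $c$.
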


\section{Proof of \hyperref[thm:MAINTHEOREM]{Main Theorem}}
\label{sec:theorem}

Fixing $\alpha>0$ (we may assume $\alpha < 1/3$), our goal is to show there are arbitrarily large graphs $G$ of positive density satisfying $\tau_3(G) > (1-o(1))|G|/2$ but nonetheless $\nu_3(G) < (1+\alpha)|G|/4$. To do this, we use a probabilistic construction starting with a graph promised by the \hyperref[lem:MAINLEMMA]{main lemma}.

Set $c=\alpha/6$ and let $H$ be a triangle-free, $d$-regular, $c$-fair graph on $t$ vertices, where $d \geq (2c)^{-1}$. Let $p=\frac{1-c}{2cd}$ and $q=\frac{1-c}{2ct}$, noting that $p,q \in (0,1)$. Let $K=K_{H,H}$, and observe that $K \cdot K_a$ is the graph obtained from $K$ when each vertex is ``blown up'' to a clique of size $a$. Call each of these $K_a$'s in $K \cdot K_a$ a \emph{block}, and for each $v\in V(K)$, denote by $B_v$ the block corresponding to $v$. Also, consistent with Definition \ref{def:edgetypes}, call an edge $xy \in K \cdot K_a$ an \emph{internal edge}, \emph{external edge}, or \emph{vertex edge} according to whether it comes from an internal edge, external edge, or vertex of $K$.

For each $a \in \mathbb{N}$ (think:\ large), let $G_a$ be the random graph obtained from $K \cdot K_a$ by deleting each internal edge with probability $1-p$ and each external edge with probability $1-q$, these choices made independently. Then since $|\nabla_{G_a}(B_u,B_w)| \backsim \mathrm{Bin}(a^2,p)$ for each internal $uw \in K$ and $|\nabla_{G_a}(B_u,B_w)| \backsim \mathrm{Bin}(a^2,q)$ for each external $uw \in K$, Theorem \ref{thm:Chernoff} says that each of these numbers $|\nabla_{G_a}(B_u,B_w)|$ is typically close to its expectation. To be precise, for each $uw \in K$ (internal or external), if we set $X_{uw}=|\nabla_{G_a}(B_u,B_w)|$, $\mu_{uw}=\mathbb{E}X_{uw}$ and $x=a\log a$, then Theorem \ref{thm:Chernoff} gives $\mathbb{P}(|X_{uw}-\mu_{uw}|\geq x) = O(a^{-2})=o(1)$ as $a \rightarrow \infty$. Since $|K|=t^2+td$ is fixed, $\mu_{uw} = \Theta(a^2)$ and $x=o(a^2)$, it holds w.h.p.\ as $a \rightarrow \infty$ that $X_{uw} \sim \mu_{uw}$ for all $uw \in K$. We may thus assume $G_a$ satisfies this property, whence
\begin{align}
\label{eq:edges1} |\{xy \in G_a \mid xy \text{ internal}\}| &\sim tda^2p = \frac{a^2t(1-c)}{2c}; \\
\label{eq:edges2} |\{xy \in G_a \mid xy \text{ external}\}| &\sim t^2a^2q = \frac{a^2t(1-c)}{2c}; \\
\label{eq:edges3} |\{xy \in G_a \mid xy \text{ vertex}\}| &= 2t\binom{a}{2} \sim a^2t.
\end{align}

We claim that, w.h.p.\ as $a \rightarrow \infty$, $G_a$ meets the requirements of Theorem \ref{thm:MAINTHEOREM}. The first and third conditions are easy to check. For density, letting $n=|V(G_a)|=2ta$ and $m=|G_a|$, we have 
\begin{equation}
\label{eq:numEdges}
m \sim a^2t + 2\frac{a^2t(1-c)}{2c} = \frac{a^2t}{c} = n^2(4tc)^{-1},
\end{equation}
where $(4tc)^{-1}<1/2$ is a constant.

To see that $\nu_3(G_a) < (1+\alpha)m/4$, it suffices to find a fractional triangle edge cover of $G_a$ of total weight less than $(1+\alpha)m/4$, since (recall) $\nu_3(G_a) \leq \nu_3^*(G_a) = \tau_3^*(G_a)$. But this is easy:\ simply placing weight 1 on all vertex edges and weight 1/2 on all external edges yields a fractional triangle edge cover of $G_a$ (here the triangle-freeness of $H$ is crucial) with total weight asymptotic to \[ a^2t + \frac{1}{2}\frac{a^2t(1-c)}{2c} = \frac{a^2t}{4c}(1+3c) = (1+\alpha/2 \pm o(1))\frac{m}{4} < (1+\alpha)\frac{m}{4}. \]

The real work is showing that $\tau_3(G_a) > (1-o(1))m/2$. To this end let $F \subseteq G_a$ be triangle-free; we need to show $|F|\leq (1+o(1))m/2$. More precisely, we show that given any $\delta>0$, we have $|F|<(1+\delta)m/2$ for large enough $a$. For this we apply the usual Regularity Lemma---i.e.\ Lemma \ref{lem:RL} with $b=r=s_1=1$---to $F$. Pick (with foresight) $\epsilon<\delta/(48tc)$, and let $2t \lceil \epsilon^{-1} \rceil$ be the ``$m$'' of the lemma. Let $\Pi = (V_0,V_1,\ldots,V_k)$ be the partition given by the lemma. By comments (\hyperref[commenti]{i}) and (\hyperref[commentii]{ii}) after the lemma, we may assume $\Pi$ refines the partition of $V(F)=V(G_a)$ into blocks and splits each block into exactly $k/(2t) =: \eta$ nonexceptional parts plus some vertices in $V_0$.

For a pair $V_i,V_j \in \Pi$ with $V_i\subseteq B_u$ and $V_j \subseteq B_w$, call the pair \emph{internal} or \emph{external} if $uw$ is an internal or external edge of $K$ (respectively), and a \emph{vertex pair} if $u=w$. Consider the graph on $[k]$ where $ij$ is an edge iff $V_i,V_j$ is an internal, external or vertex pair. Notice that this graph is (isomorphic to) $K \cdot K_\eta$, with blocks $B'_v=\{i \in [k] \mid V_i \subseteq B_v\}$, $v \in V(K)$. Letting $l = |V_1|$, observe also that
\begin{equation}
\label{eq:edgesofG_a'vsG_a}
\text{each}
\left\{ \begin{aligned}
&\text{vertex} \\
&\text{internal} \\
&\text{external} 
\end{aligned} \right\}
\text{edge } uw \in K\cdot K_{\eta} \text{ corresponds to }
\left\{ \begin{aligned}
&\text{exactly } l^2 \\
&\text{about } l^2p \\
&\text{about } l^2q
\end{aligned} \right\}
\text{ edges of } G_a,
\end{equation}
where just as in \eqref{eq:edges1}--\eqref{eq:edges3}, each ``about'' in \eqref{eq:edgesofG_a'vsG_a} hides an $\tilde{O}(l)=\tilde{O}(n)=o(m)$ Chernoff error as $a \rightarrow \infty$.

To account for the different quantities on the right side of \eqref{eq:edgesofG_a'vsG_a}, we assign weights to the edges of $K \cdot K_{\eta}$:\ each vertex edge weighs $c/(t\eta^2)$, each internal edge $pc/(t\eta^2) = \frac{1-c}{2td\eta^2}$, and each external edge $qc/(t\eta^2) = \frac{1-c}{2t^2\eta^2}$, so that the weight $\wtt(uw)$ of $uw \in K \cdot K_{\eta}$ is $c/(t\eta^2l^2)$ times the (approximate) number of corresponding edges in $G_a$. With these weights, the total weight of the edges corresponding to an internal $uw \in K$ is $\frac{1-c}{2td}$, the total weight of the edges corresponding to an external $uw \in K$ is $\frac{1-c}{2t^2}$, and the total weight of the edges in a block $B'_v$ is $\binom{\eta}{2}\frac{c}{t\eta^2} \lesssim \frac{c}{2t}$ (where $\lesssim$ means approximate equality and $\leq$). 

Leaving the topic of edge weights for a moment, we now let $F'$ be the subgraph of $F$ obtained after we delete the following edges from $F$: edges incident to $V_0$; edges inside some $V_i$, $i\in [k]$; edges that join pairs that are not $(1;F,\epsilon)$-regular; and edges that join pairs with $(1;F)$-density less than $2\epsilon$. (This cleanup is of course a standard concomitant of the Regularity Lemma.) Since $l \leq n/k$, this deletes at most 
\begin{equation}
\label{eq:numEdgesDeletedfromF}
\epsilon n^2 + k\binom{l}{2} + \epsilon \binom{k}{2}l^2 + 2\epsilon l^2 \binom{k}{2} \leq 3\epsilon n^2
\end{equation}
 edges from $F$. 

Let $\tilde{F}$ be the subgraph of $K \cdot K_{\eta}$ with $ij \in \tilde{F}$ iff there is an edge joining $V_i$ and $V_j$ in $F'$. By Lemma \ref{lem:CL} (with $s=1$) and the triangle-freeness of $F$, $\tilde{F}$ is also triangle-free. Let $F''$ be the subgraph of $G_a$ defined by
\[ \nabla_{F''}(V_i,V_j)=
\begin{cases}
\nabla_{G_a}(V_i,V_j) &\text{if } ij \in \tilde{F} \\
\varnothing &\text{if } ij \notin \tilde{F}
\end{cases}. \]
With these definitions, \eqref{eq:edgesofG_a'vsG_a}, \eqref{eq:numEdgesDeletedfromF} and the calculations between them give
\begin{equation}
\label{eq:|F|}
|F| \leq |F'| + 3\epsilon n^2 \leq |F''| + 3\epsilon n^2 \sim \wtt(\tilde{F})/(c/(t\eta^2l^2)) + 3\epsilon n^2,
\end{equation}
where (of course) $\wtt(\tilde{F}) = \sum_{uw \in \tilde{F}} \wtt(uw)$.

Our next goal is to massage $\tilde{F}$ until it resembles a configuration on $K$. For each $x \in V(\tilde{F})=V(K\cdot K_{\eta})$, let $\wtt(x)$ be the sum of the weights of its incident $\tilde{F}$-edges.\footnote{For the rest of the argument we use $x,y,z$ and $w$, rather than $i$ and $j$, for vertices of $K\cdot K_{\eta}$, since we want several letters from the same part of the alphabet. We use $u$ and $v$ for vertices of $K$.} Fix some order $\pi$ of $V(K)$, and for each $v \in V(K)$, in the chosen order, do the following, making changes to $\tilde{F}$ as necessary. We continue to write $\tilde{F}$ for the evolving graph.
\begin{enumerate}
\item \phantomsection \label{eq:step1} Pick $x \in B'_v$ such that $\wtt(x) = \max_{y \in B'_v} \wtt(y)$.
\item Set $S_v = \{ y \in B'_v \mid xy \in \tilde{F} \}$ and $T_v = B'_v \setminus S_v$.
\item For each $y \in T_v \setminus \{x\}$, replace $N_{\tilde{F}}(y)$ by $N_{\tilde{F}}(x)$.
\item Pick $z \in S_v$ such that $\wtt(z) = \max_{w \in S_v} \wtt(w)$.
\item \phantomsection \label{eq:step5} For each $w \in S_v \setminus \{z\}$, replace $N_{\tilde{F}}(w)$ by $N_{\tilde{F}}(z)$.
\end{enumerate}
Let $\tilde{F}' \subseteq K \cdot K_{\eta}$ be the graph obtained from $\tilde{F}$ after performing these steps for each $v \in V(K)$. We make the following observations about $\tilde{F}'$:
\begin{itemize}
\item[(i)] \phantomsection \label{eq:point(i)} $\wtt(\tilde{F}') \geq \wtt(\tilde{F})$;
\item[(ii)] \phantomsection \label{eq:point(ii)} $\tilde{F}'$ is triangle-free, since $\tilde{F}$ is---note in particular that $S_v \subseteq N_{\tilde{F}}(x)$ implies $\tilde{F}[S_v] = \varnothing$;
\item[(iii)] \phantomsection \label{eq:point(iii)} For each $v\in V(K)$, $\tilde{F}'[B'_v]$ is the complete bipartite graph between $S_v$ and $T_v$; and
\item[(iv)] \phantomsection \label{eq:point(iv)} For each $v\in V(K)$, $z,w \in S_v$, and $x,y \in T_v$, we have $N_{\tilde{F}'}(z)=N_{\tilde{F}'}(w)$ and $N_{\tilde{F}'}(x)=N_{\tilde{F}'}(y)$.
\end{itemize}
The only tricky point here is (\hyperref[eq:point(iv)]{iv}). Clearly for a given $u \in V(K)$, the condition in (\hyperref[eq:point(iv)]{iv}) holds at $u$ immediately after we perform steps \hyperref[eq:step1]{1}--\hyperref[eq:step5]{5} at $u$. But how do we know we don't violate the condition at $u$ in the process of doing \hyperref[eq:step1]{1}--\hyperref[eq:step5]{5} at some other $v \in V(K)$ coming later in $\pi$? Assume we do, so that there exist $x,y \in R_u \in \{S_u,T_u\}$ and $z \in B'_v$ such that $xz \in \tilde{F}'$ and $yz \notin \tilde{F}'$. Just before we began \hyperref[eq:step1]{1}--\hyperref[eq:step5]{5} at $v$, $z$ was $\tilde{F}$-adjacent to either both of $x,y$ or neither, so we must have replaced $N_{\tilde{F}}(z)$ in the course of doing \hyperref[eq:step1]{1}--\hyperref[eq:step5]{5} at $v$. So there was some $w \in B'_v$ (whose $\tilde{F}$-neighborhood replaced that of $z$) which, just before beginning \hyperref[eq:step1]{1}--\hyperref[eq:step5]{5} at $v$, was $\tilde{F}$-adjacent to exactly one of $x,y$. But this is a contradiction.

For each $v \in V(K)$, let $R_v$ be the larger of $S_v,T_v$, and $P_v$ the smaller (choose arbitrarily if they are the same size). Let $\hat{F}$ be the subgraph of $K^+$ obtained from $\tilde{F}'$ by collapsing each $R_v$ to a vertex $v^\btt$ and each $P_v$ to a vertex $v^\stt$, and set $\phi(v^\btt) = |R_v|/\eta$ and $\phi(v^\stt) = |P_v|/\eta = 1-\phi(v^\btt)$ for each $v \in V(K)$. Then (\hyperref[eq:point(ii)]{ii})--(\hyperref[eq:point(iv)]{iv}) imply that $(\hat{F},\phi)$ is a configuration on $K$, after adding vertex edges $v^\btt v^\stt$ for those $v \in V(K)$ for which $P_v=\varnothing$ (if any).

Now since $H$ is $c$-fair, we have $\wtt_c(\hat{F},\phi) \leq 1/2$. By the weight calculations after \eqref{eq:edgesofG_a'vsG_a}, we have $\wtt_c(\hat{F},\phi) \geq \wtt(\tilde{F}')$ (the only error here comes from the weight in a block of $K \cdot K_{\eta}$ being $\binom{\eta}{2}\frac{c}{t\eta^2}$ instead of exactly $\frac{c}{2t}$). Thus by \eqref{eq:|F|} and (\hyperref[eq:point(i)]{i}), using $\eta l \leq n/(2t)$ and $\epsilon<\delta/(48tc)$, we have 
\begin{align*}
|F| \leq \frac{\wtt(\tilde{F})}{c/(t\eta^2l^2)} + 3\epsilon n^2 + o(m) &\leq \frac{\wtt(\tilde{F}')}{c/(t\eta^2 l^2)} + 3\epsilon n^2 + o(m) \\
&\leq \frac{1/2}{c/(t\eta^2l^2)} + 3\epsilon n^2 + o(m) \\
&< n^2(8tc)^{-1} + n^2 \delta(16tc)^{-1} + o(m) \\
&< (1+\delta/2 + o(1)) m/2 \\
&< (1+\delta)m/2,
\end{align*}
where the penultimate inequality recalls \eqref{eq:numEdges} and the last holds for large enough $a$.  \qed

\section{Proof of \hyperref[lem:MAINLEMMA]{Main Lemma}}
\label{sec:lemma}

We now turn to the proof of Lemma \ref{lem:MAINLEMMA}, that for any $c>0$ there are triangle-free, $d$-regular, $c$-fair graphs $H$ with arbitrarily large $d$. Luckily we need not invent anything here; rather we show---though not so easily---that for any fixed $c$, all sufficiently large graphs from a well-known family are $c$-fair. The relevant family was described by Noga Alon in \cite{AlonGraphs}; since he proved therein that all graphs in this family are triangle-free and regular, with degree going to infinity, this will prove the \hyperref[lem:MAINLEMMA]{main lemma}. We first list the relevant properties of these graphs.

\begin{prop}[{\cite[Thm. 2.1]{AlonGraphs}}]
\label{prop:AlonGraphs}
For all $t_0 \in \mathbb{N}$, there exist $t \geq t_0$ and a triangle-free graph $H_t$ on $t$ vertices satisfying
\begin{align}
&\bullet H_t \text{ is } d\text{-regular, with } d = \Theta(t^{2/3}) \text{, and} \label{eq:Alongraphsd}\\
&\bullet \text{all eigenvalues } \lambda_i \text{ of } H_t \text{, other than the largest, satisfy } |\lambda_i| = O(\sqrt{d}) = O(t^{1/3}). \label{eq:AlongraphsLambda}
\end{align}
\end{prop}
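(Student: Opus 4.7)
The plan is to exhibit, for each sufficiently large prime power $q$, a graph $H$ on $t = \Theta(q^3)$ vertices that is $d$-regular with $d = \Theta(q^2) = \Theta(t^{2/3})$; since prime powers are unbounded in $\mathbb{N}$, letting $q$ grow then produces $t \geq t_0$ as required. The graph I would use is Alon's explicit construction from \cite{AlonGraphs}, realized as the Cayley graph $\mathrm{Cay}(G, S)$ with $G = (\mathbb{F}_{q^3}, +)$ and $S \subseteq G \setminus \{0\}$ a symmetric connection set of size $\Theta(q^2)$ cut out by an algebraic condition (for instance a symmetrized norm-one fiber $\{x : N_{\mathbb{F}_{q^3}/\mathbb{F}_q}(x) = 1\}$, or the analogous variety used in \cite{AlonGraphs}). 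Regularity, together with the degree count $|S| = \Theta(q^2)$, is then immediate, yielding \eqref{eq:Alongraphsd}; what remains is to verify triangle-freeness and the spectral bound \eqref{eq:AlongraphsLambda}.

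For triangle-freeness I would invoke the standard fact that a Cayley graph on an abelian group is triangle-free iff its connection set contains no three elements summing to zero (a triangle $x, y, z$ forces $(x-y) + (y-z) + (z-x) = 0$ with each summand in $S$). Thus it suffices to rule out solutions to $s_1 + s_2 + s_3 = 0$ with $s_i \in S$. Alon's choice of $S$ accomplishes this algebraically: any such relation translates, via the defining polynomial description of $S$, into a forbidden identity in $\mathbb{F}_{q^3}$ that has no solutions under a mild arithmetic condition on $q$ (e.g.\ a residue restriction, or a parity/characteristic choice), so $S$ is automatically $3$-sum-free.

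For the eigenvalue bound \eqref{eq:AlongraphsLambda}, I would use that the spectrum of $\mathrm{Cay}(\mathbb{F}_{q^3}, S)$ consists of the character sums $\widehat{S}(\chi) = \sum_{s \in S} \chi(s)$ as $\chi$ runs over the Pontryagin dual. The trivial character yields the principal eigenvalue $|S| = d$. Every nontrivial $\chi$ has the form $x \mapsto \psi(\mathrm{Tr}(\alpha x))$ for some $\alpha \in \mathbb{F}_{q^3}^*$ and a fixed nontrivial additive character $\psi$ of $\mathbb{F}_q$, so $\widehat{S}(\chi)$ becomes an exponential sum of $\psi \circ \mathrm{Tr}$ over the algebraic variety carving out $S$. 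A Weil-type bound for such sums (over a curve, or via Deligne's estimates for higher-dimensional varieties, whichever matches Alon's specific $S$) yields $|\widehat{S}(\chi)| = O(q) = O(\sqrt{d})$, giving \eqref{eq:AlongraphsLambda}.

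The main obstacle is the character-sum estimate in the previous paragraph: extracting square-root cancellation from a sum over an algebraic set is the nontrivial analytic input, relying on the Weil conjectures (or their elementary avatars for the specific variety in play). By contrast, regularity and triangle-freeness are essentially combinatorial once $S$ has been selected. The genuine content of the proposition---and the reason we appeal to \cite{AlonGraphs} rather than constructing $H$ from scratch---is packaging a single algebraic $S$ that simultaneously (a) avoids three-term additive zero-sums, securing the triangle-free property, and (b) has its Fourier transform uniformly small at every nontrivial character, securing the $O(\sqrt{d})$ spectral gap.
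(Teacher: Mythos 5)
The paper does not prove this proposition; it imports it directly from Alon \cite[Thm.~2.1]{AlonGraphs} as a black box, so there is no internal argument to compare against. Your sketch has the right shape---Cayley graph on an abelian group, regularity for free, triangle-freeness as a zero-sum-free condition on the connection set $S$, nontrivial eigenvalues as character sums $\widehat S(\chi)$ controlled by Weil-type estimates---and your closing paragraph correctly locates the whole difficulty in producing a \emph{single} $S$ of size $\Theta(t^{2/3})$ that is simultaneously zero-sum-free and has Fourier transform $O(\sqrt{|S|})$ at every nontrivial character.

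That step is exactly the gap. The concrete $S$ you offer, the norm-one fiber $\{x\in\mathbb{F}_{q^3}: N_{\mathbb{F}_{q^3}/\mathbb{F}_q}(x)=1\}$, is not sum-free: for $q=2$ the norm map to $\mathbb{F}_2^*$ is the constant $1$, so $S=\mathbb{F}_8^*$ and $\mathrm{Cay}(\mathbb{F}_8,S)=K_8$; for general $q$, $S$ is the multiplicative subgroup of index $q-1$, which one expects to meet its translate $1+S$ in about $|S|^2/q^3\approx q$ places, each such intersection giving a triangle through $0$. (One should also worry about the spectral bound: writing the indicator of $S$ in multiplicative characters and invoking the Gauss-sum bound a priori gives only $|\widehat S(\chi)|=O(q^{3/2})$, worse than the required $O(q)=O(\sqrt d)$.) Alon's actual connection set is a different, carefully engineered algebraic set over $GF(2^k)$, with $t=2^{3k}$ and $|S|=\Theta(2^{2k})$, for which sum-freeness is an explicit non-vanishing in characteristic $2$ and the spectral bound is a Weil/Carlitz--Uchiyama estimate for the associated exponential sums. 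That engineering is precisely what \cite[Thm.~2.1]{AlonGraphs} supplies, and in your proposal it is hidden behind the phrase ``the analogous variety used in \cite{AlonGraphs}''---so you have correctly named the crux without actually closing the gap.
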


Alon gives much more detailed information about these graphs, including a precise formula for $d$ and bounds on the eigenvalues, but the above properties are all we will need. In fact, a weaker eigenvalue bound than \eqref{eq:AlongraphsLambda} would suffice for our purposes. (We need such a bound primarily to guarantee good density properties for $H$, for which our (standard) tool is Lemma \ref{lem:expandermixing}). It is probably not too hard---e.g.\ by random methods, somewhat relaxing the regularity requirement of the \hyperref[lem:MAINLEMMA]{main lemma}---to produce other families of graphs, less nice than Alon's, that would be adequate here. Recognizing this, we nonetheless gladly use Alon's graphs because they are convenient and they work.

\vspace{12pt}

\textbf{\emph{Setup for the rest of this section.}} We fix $c \in (0,1]$ at the outset, and throughout we let $(F,\phi)$ be a configuration on $K=K_{H,H}$, where $H=H_t$ for some $t$. We denote the degree of $H$ by $d$ and its eigenvalues by $d = \lambda_1 > \lambda_2 \geq \cdots \geq \lambda_t$, and set $\lambda = \max_{i > 1}|\lambda_i|$.
\begin{equation}
\label{eq:Goal}
\emph{\textbf{Goal:} \text{To show that $H$ is $c$-fair whenever $t$ is sufficiently large.}}
\end{equation}
Each proposition in what follows is an asymptotic statement, making some claim about $H$ or $(F,\phi)$ as $t$ grows to infinity; thus our asymptotic notation all refers to $t \rightarrow \infty$. Our usage here may be a little confusing, since we treat $t$ as tending to infinity, whereas the discussion in Section \ref{sec:theorem} calls for a \emph{fixed} $H=H_t$ depending on $c$ (that is, on $\alpha$). But of course what we are showing here is that \emph{given} $c$, $H_t$ is $c$-fair for large enough $t$, so that for our application in Section \ref{sec:theorem} we can fix such a $t$. We always assume (as we may) that $\wtt_c(F,\phi)\geq 1/2$; we want to show that in fact $\wtt_c(F,\phi) = 1/2$.

%

Though a configuration on $K$ is defined via $K^+$, it will be more convenient in what follows to think of it in terms of $K$ itself. We next set up some notation and terminology for this purpose. 

\begin{defins}[edge classes, weight captured, gain/loss]
Given a graph $H=H_t$ and a configuration $(F,\phi)$ on $K=K_{H,H}$, we divide the edges of $K$ into four classes. An edge $uw \in K$ is of
\begin{itemize}
\item \emph{class 1} if $u^\btt w^\btt, u^\stt w^\stt \in F$,
\item \emph{class 2} if $u^\btt w^\btt \in F$, $u^\stt w^\stt \notin F$,
\item \emph{class 3} if $u^\btt w^\stt, u^\stt w^\btt \in F$, and
\item \emph{class 4} otherwise.
\end{itemize} 
For each $uw \in K$, we will say our configuration \emph{captures} the fraction $\sum_{u^\xtt w^\ytt \in F} \phi(u^\xtt)\phi(w^\ytt)$ of the weight of the edge. This weight is $\frac{1-c}{2td}$ for internal edges and $\frac{1-c}{2t^2}$ for external edges. Similarly, we say our configuration \emph{captures} the fraction $2\phi(v^\btt)\phi(v^\stt)$ of the weight of each vertex $v$ of $K$. This weight is $\frac{c}{2t}$. For $v \in V(K)$, set $\delta_v=\phi(v^\btt)-1/2$, so that $\delta_v$ measures how far from evenly the configuration splits the mass of $v$. Then e.g.\ if $uw \in K$ is of class 1, our configuration captures the fraction $(1/2+\delta_u)(1/2+\delta_w)+(1/2-\delta_u)(1/2-\delta_w) = 1/2 +2\delta_u \delta_w$ of the weight of $uw$, and if $uw$ is of class 3 then it captures the fraction $1/2 - 2\delta_u \delta_w$. Similarly, it captures the fraction $1/2-2\delta_v^2$ of the weight of each vertex $v$.

Given $uw \in K$, we sometimes want to compare the fraction of the weight of $uw$ captured by our configuration to the fraction of the weight of $uw$ captured by the na\"ive configuration, namely $1/2$. We call this difference $\sum_{u^\xtt w^\ytt \in F}\phi(u^\xtt)\phi(w^\ytt)-1/2 \in [-1/2,1/2]$ the \emph{fractional gain at $uw$}, and its negative the \emph{fractional loss at $uw$}. (Either of these can be positive or negative.) More often we want to weight the fractional gain (loss) at an edge by the appropriate edge weight ($\frac{1-c}{2td}$ or $\frac{1-c}{2t^2}$); we call this product simply the \emph{gain} (\emph{loss}) at the edge (no ``fractional''). (Examples: if the fractional gain at internal edge $uw$ is $.16$, then the gain at $uw$ is $.16(\frac{1-c}{2td})$; if $vz$ is an external edge of class 3, then the loss at $vz$ is $2\delta_v\delta_z(\frac{1-c}{2t^2})$.) We use analogous terminology for vertices: the \emph{fractional loss at $v$} is $2\delta_v^2$, and the \emph{loss at $v$} is $c\delta_v^2/t$.

Write $\zeta_i$ (respectively $\zeta_e$) for the average fraction of the weight of an internal (respectively external) edge captured by our configuration---that is, \[ \zeta_i = \frac{1}{td}\sum\limits_{\substack{u^\xtt w^\ytt \in F \\ \text{internal}}} \phi(u^\xtt)\phi(w^\ytt) \quad \text{ and } \quad 
\zeta_e = \frac{1}{t^2}\sum\limits_{\substack{u^\xtt w^\ytt \in F \\ \text{external}}} \phi(u^\xtt)\phi(w^\ytt) \]
---and set $\gamma_i=\zeta_i-1/2$, $\gamma_e=\zeta_e-1/2$. Thus $\gamma_i$ and $\gamma_e$ represent the average fractional gain of our configuration on internal and external edges of $K$, respectively. Lastly, write $\delta$ for the average of the $\delta_v$'s over $V(K)$. \xqed{\lozenge}
\end{defins}

With these definitions, notice that $\left(\frac{1-c}{2}\right)(\gamma_i+\gamma_e)$ is the total gain over all edges of $K$. So, to reiterate \eqref{eq:Goal}, our goal is to show that this is always counterbalanced by an equal or larger loss in the vertices of $K$ whenever $t$ is sufficiently large. What follows is a long string of propositions culminating in a proof of this. 

\begin{prop}
\label{prop:2ndapplreg}
Let $R$ be an ETF subgraph of $K$ containing fractions $\xi_i(R)$ and $\xi_e(R)$ of the internal and external edges of $K$, respectively. Then
\begin{align}
\label{eq:xi_i+xi_e<1+o(1)}
\xi_i(R) + \xi_e(R) < 1+o(1).
\end{align}
\end{prop}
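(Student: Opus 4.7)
The plan is to apply the generalized Regularity Lemma (Lemma~\ref{lem:RL}) to $R$, pass to a reduced graph that inherits the ETF condition, and then invoke the ``crossing Mantel'' lemma (Lemma~\ref{lem:externalMantel}) to control total edge density.

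Fix $\delta>0$; I would aim to show $\xi_i(R)+\xi_e(R)<1+\delta$ for all sufficiently large $t$, then let $\delta\to 0$. Pick $\epsilon=\epsilon(\delta)$ small. Split $R=R_i\cup R_e$, where $R_i\subseteq H_X\cup H_Y$ collects the internal edges and $R_e\subseteq K_{t,t}$ the external ones. Apply Lemma~\ref{lem:RL} with $r=2$, graphs $R_i,R_e$, and scaling factors $s_1=d/t$ and $s_2=1$. The $(d/t;2,\beta)$-boundedness of $R_i$ follows from Lemma~\ref{lem:expandermixing} together with \eqref{eq:AlongraphsLambda}: the relative Expander-Mixing error $\lambda t/(d\sqrt{|U||W|})$ is $o(1)$ once $|U|,|W|\geq 2\beta t$, so $d_{d/t,R_i}(U,W)\leq d_{d/t,H}(U,W)\leq 1+o(1)\leq 2$. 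The graph $R_e$ is trivially $(1;1,\beta)$-bounded. Using comment (\hyperref[commenti]{i}) after Lemma~\ref{lem:RL}, I would take the partition $\Pi=(V_0,V_1,\ldots,V_k)$ to refine $\{X,Y\}$, so each nonexceptional part lies on a single side; let $[k]=A_X\sqcup A_Y$ accordingly, with $|A_X|,|A_Y|\sim k/2$ and $l:=|V_1|\sim 2t/k$.

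Next I would form a reduced graph $\tilde R$ on vertex set $[k]$ by declaring $ij\in\tilde R_i$ iff $V_i,V_j$ lie on the same side, are $(d/t;R_i,\epsilon)$-regular, and have $(d/t;R_i)$-density at least $2\epsilon$; and $ij\in\tilde R_e$ iff $V_i,V_j$ lie on opposite sides, are $(1;R_e,\epsilon)$-regular, and have $(1;R_e)$-density at least $2\epsilon$. The central claim is that $\tilde R$ has no crossing triangle on the bipartition $A_X\sqcup A_Y$. For suppose there were one, say $i,j\in A_X$ and $l\in A_Y$ with $ij\in\tilde R_i$ and $il,jl\in\tilde R_e$. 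Applying Lemma~\ref{lem:CL} to $R$ with $A=V_l$, $B=V_i$, $B'=V_j$, and $s=d/t$—valid because $R$-edges between $V_l$ and $V_i,V_j$ are external (so governed by $R_e$ with scaling $1$) and those between $V_i,V_j$ are internal (governed by $R_i$ with scaling $d/t$)—produces a triangle in $R$ with one internal and two external edges, which is an external triangle. This contradicts the ETF hypothesis. Hence Lemma~\ref{lem:externalMantel} applies to $\tilde R$ on $A_X\sqcup A_Y$ and yields $|\tilde R|\leq(k/2)^2=k^2/4$.

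To translate back, I would upper-bound each internal reduced pair's $R_i$-edge count by $(d/t)l^2+\lambda l$ via Lemma~\ref{lem:expandermixing} and each external reduced pair's $R_e$-edge count trivially by $l^2$, and absorb the three standard error sources: edges touching $V_0$ (at most $O(\epsilon td)$ internal and $O(\epsilon t^2)$ external, using $d$-regularity of $H$); edges in irregular pairs (another $O(\epsilon td)$ and $O(\epsilon t^2)$, by boundedness); and edges in sparse pairs (at most a $2\epsilon$-fraction of the trivial bound). The outcome is $\xi_i\leq 4|\tilde R_i|/k^2+O(\epsilon)$ and $\xi_e\leq 4|\tilde R_e|/k^2+O(\epsilon)$, so by the Mantel bound above,
\[ \xi_i(R)+\xi_e(R)\leq \frac{4|\tilde R|}{k^2}+O(\epsilon)\leq 1+O(\epsilon)<1+\delta \]
once $\epsilon$ is small enough; this is the desired $\xi_i(R)+\xi_e(R)<1+o(1)$.

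The main obstacle is the error bookkeeping in the translation step, especially confirming that the Expander-Mixing correction $\lambda l$, summed over all $\tilde R_i$-pairs, is $o(td)$: since $k$ is bounded by a constant $M(\epsilon)$ depending only on $\epsilon$, this sum is at most $k^2\lambda l=O(\lambda tk)=O(t^{4/3})=o(t^{5/3})=o(td)$, which is harmless. A related subtlety is verifying the boundedness hypothesis of Lemma~\ref{lem:RL} uniformly in $t$, which is exactly what the spectral bound \eqref{eq:AlongraphsLambda} of Alon's graphs is used for.
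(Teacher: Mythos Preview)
Your proposal is correct and follows essentially the same route as the paper: apply the sparse Regularity Lemma to the internal and external parts of $R$ with scalings $d/t$ and $1$, use the Expander Mixing Lemma for boundedness, refine the partition across $\{X,Y\}$, invoke the Counting Lemma to make the reduced graph ETF, apply the crossing-Mantel Lemma~\ref{lem:externalMantel} to get $|\tilde R|\leq k^2/4$, and translate back. The only omissions are cosmetic bookkeeping---you should also account for edges inside the parts $V_i$ (another $O(\epsilon td)$ internally), and to apply Lemma~\ref{lem:externalMantel} verbatim you want $|A_X|=|A_Y|$ exactly, which comment~(\hyperref[commentii]{ii}) gives (or pad with isolated vertices); neither affects the argument.
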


\begin{proof}
We apply Lemma \ref{lem:RL} with $r=b=2$, $\epsilon$ arbitrarily small but fixed, $m = 2\lceil \epsilon^{-1} \rceil$, $H_1 = R[X] \cup R[Y]$, $H_2 = \nabla_R(X,Y)$, $s_1 = d/t$, and $s_2 = 1$.

We must first check that (for large enough $t$) $H_1$ is $(d/t; 2, \beta)$-bounded and $H_2$ is $(1;2,\beta)$-bounded, where $\beta = \beta(\epsilon,b,m,r)>0$ is given by the lemma (but of course the statement is really that these hold for any fixed $\beta$ and, again, sufficiently large $t$). The second of these is trivial. For the first, letting $U,W \subseteq V(K)$ be disjoint with $|U|,|W| \geq 2t\beta$, we have, using Lemma \ref{lem:expandermixing},
\begin{align*}
d_{d/t,H_1}(U,W) &= \frac{|\nabla_{H_1}(U,W)|}{(d/t)|U||W|} = \frac{|\nabla_{H}(U\cap X,W\cap X)|}{(d/t)|U||W|} + \frac{|\nabla_{H}(U \cap Y,W \cap Y)|}{(d/t)|U||W|}  \\
&\leq \frac{|U \cap X||W \cap X|d/t + \lambda \sqrt{|U \cap X||W \cap X|}}{(d/t)|U||W|} \\
&\qquad \qquad + \frac{|U \cap Y||W \cap Y|d/t + \lambda \sqrt{|U \cap Y||W \cap Y|}}{(d/t)|U||W|} \\
&\leq \frac{|U||W|d/t + \lambda \sqrt{|U||W|}}{(d/t)|U||W|} \leq 1+o(1),
\end{align*}
which is at most $2$ for large enough $t$.

Let $\Pi = (V_0,V_1,\ldots,V_k)$ be the partition given by Lemma \ref{lem:RL}. By comment (\hyperref[commenti]{i}) following the lemma we may assume each nonexceptional part of $\Pi$ is contained in either $X$ or $Y$, and by comment (\hyperref[commentii]{ii}) we may assume $|V_0 \cap X| = |V_0 \cap Y|$, implying that $X$ and $Y$ each contain exactly $k/2$ parts of $\Pi$. Given a pair of nonexceptional parts of $\Pi$, we say the pair is \emph{external} if exactly one of them is contained in $X$, and \emph{internal} otherwise.

We now delete the following edges from $R$: edges incident to $V_0$; edges inside some $V_i$, $i \in [k]$; edges that join (internal) pairs that are not $(d/t;H_1,\epsilon)$-regular; edges that join (external) pairs that are not $(1;H_2,\epsilon)$-regular; edges that join internal pairs with $(d/t;H_1)$-density less than $2\epsilon$; and edges that join external pairs with $(1;H_2)$-density less than $2\epsilon$. The following table lists upper bounds for the numbers of edges deleted from $H_1$ and $H_2$ in each of these categories. For convenience we set $l := |V_1| \leq 2t/k$.

\begin{longtable}{ | p{1.89in} | p{2.45in} | p{1.4in} |}
\cline{2-3}
\multicolumn{1}{c|}{}
& \multicolumn{1}{c|}{$H_1 = R[X]\cup R[Y]$} 
& \multicolumn{1}{c|}{$H_2 = \nabla_R(X,Y)$} \\ \hline \hline
edges incident to $V_0$ 
& $\leq d|V_0| \leq 2\epsilon td$ 
& $\leq t|V_0| \leq 2\epsilon t^2$ \\ \hline
edges inside some $V_i$ 
& $\leq k\left( \frac{l^2d}{2t} + \frac{\lambda l}{2}\right) \leq 2td/k + t\lambda \leq \epsilon(td + t\lambda k) \leq \epsilon td(1 + o(1))$ 
& 0 \\ \hline
edges joining pairs that are not $(d/t;H_1,\epsilon)$-regular 
& $\leq \epsilon\binom{k}{2}\left( \frac{l^2d}{t} + \lambda l \right) \leq \epsilon(2td+\lambda tk) \leq \epsilon td(2+o(1))$
& 0 \\ \hline
edges joining pairs that are not $(1;H_2,\epsilon)$-regular 
& 0 
& $\leq \epsilon\binom{k}{2}l^2 \leq 2\epsilon t^2$ \\ \hline
edges joining internal pairs with $(d/t;H_1)$-density less than $2\epsilon$
& $\leq 2\binom{k/2}{2}(2\epsilon l^2d/t) \leq 2\epsilon dt$
& 0 \\ \hline
edges joining external pairs with $(1;H_2)$-density less than $2\epsilon$ \raggedright
& 0
& $\leq (k/2)^2 2\epsilon l^2 \leq 2\epsilon t^2$ \\ \hline \hline
\textbf{TOTAL} 
& $\leq (7+o(1))\epsilon td$
& $\leq 6\epsilon t^2$ \\ \hline
\end{longtable}

Let $\tilde{X} = \{ i \in [k] \mid V_i \subseteq X\}$ and $\tilde{Y} = [k] \setminus \tilde{X}$. Let $\tilde{R}$ be the graph on $\tilde{X} \cup \tilde{Y}$ where $ij \in \tilde{R}$ iff there is an undeleted edge joining $V_i$ and $V_j$ in $R$. Then since $R$ is ETF, Lemma \ref{lem:CL} gives that $\tilde{R}$ is as well (meaning, as usual, that it contains no triangles meeting both $\tilde{X}$ and $\tilde{Y}$).

Now each internal edge of $\tilde{R}$ corresponds to a pair in $\Pi$ whose $R$-edges contribute a total of at most \[ \frac{l^2d/t+\lambda l}{td} \leq \frac{4}{k^2}+\frac{2\lambda}{kd} = 4/k^2 + o(1)\] to the fraction $\xi_i(R)$. Similarly each external edge of $\tilde{R}$ corresponds to a pair in $\Pi$ whose $R$-edges contribute a total of at most $l^2/t^2 \leq 4/k^2$ to the fraction $\xi_e(R)$. By Lemma \ref{lem:externalMantel} $|\tilde{R}| \leq k^2/4$, so the contribution to $\xi_i(R)+\xi_e(R)$ from undeleted $R$-edges is at most $1+k^2o(1) = 1+o(1)$. And as computed in the table above, the contribution to $\xi_i(R)+\xi_e(R)$ from deleted $R$-edges is at most $13\epsilon + o(1)$. Thus $\xi_i(R)+\xi_e(R) \leq 1 + 13\epsilon + o(1)$. Since $\epsilon$ was arbitrarily small, the proposition is proved. \qedhere
\end{proof}


We now return to our configuration $(F,\phi)$.

\begin{prop}
\label{prop:gamma_i+gamma_e<o(1)}
We have $\zeta_i+\zeta_e<1+o(1)$, or equivalently,
\begin{equation}
\label{eq:gamma_i+gamma_e<o(1)}
\gamma_i+\gamma_e< o(1).
\end{equation}
\end{prop}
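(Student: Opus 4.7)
The plan is to reduce Proposition~\ref{prop:gamma_i+gamma_e<o(1)} to Proposition~\ref{prop:2ndapplreg} via a random ``projection'' of the configuration $(F,\phi)$ on $K^+$ down to an (always ETF) subgraph of $K$. Specifically, independently for each $v \in V(K)$, sample a random type $\sigma_v \in \{\btt,\stt\}$ with $\mathbb{P}(\sigma_v = \btt) = \phi(v^\btt)$ and $\mathbb{P}(\sigma_v = \stt) = \phi(v^\stt)$, and define the random subgraph $R \subseteq K$ by declaring $uw \in R$ iff $u^{\sigma_u} w^{\sigma_w} \in F$.

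The first key step is to verify that $R$ is surely ETF. If $u,v,w$ formed a triangle in $R$ with $uv$ (say) external in $K$, then all three of $u^{\sigma_u} v^{\sigma_v}$, $v^{\sigma_v} w^{\sigma_w}$, $u^{\sigma_u} w^{\sigma_w}$ would lie in $F$. These vertices of $K^+$ are distinct (since $u,v,w$ are distinct in $K$), so they form a triangle in $F$; and by Definition~\ref{def:edgetypes} this triangle contains the external edge $u^{\sigma_u} v^{\sigma_v}$, contradicting the ETF hypothesis on $F$. The second step is to compute expectations: for any edge $uw \in K$, independence of $\sigma_u$ and $\sigma_w$ gives
\[
\mathbb{P}(uw \in R) \;=\; \sum_{\xtt,\ytt} \phi(u^\xtt)\phi(w^\ytt)\, \mathbb{1}[u^\xtt w^\ytt \in F] \;=\; \sum_{u^\xtt w^\ytt \in F} \phi(u^\xtt)\phi(w^\ytt).
\]
Summing over the $td$ internal edges of $K$ and dividing yields $\mathbb{E}[\xi_i(R)] = \zeta_i$; the same computation over external edges yields $\mathbb{E}[\xi_e(R)] = \zeta_e$.

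To finish, apply Proposition~\ref{prop:2ndapplreg} to each realization of $R$: since $R$ is always ETF, we have $\xi_i(R) + \xi_e(R) < 1 + o(1)$ surely, and taking expectations gives $\zeta_i + \zeta_e < 1 + o(1)$, which is exactly \eqref{eq:gamma_i+gamma_e<o(1)}. The only subtlety worth flagging—and essentially the sole ``obstacle''—is that one must check the $o(1)$ in Proposition~\ref{prop:2ndapplreg} is uniform over all ETF subgraphs $R$, so that it passes through the expectation. This is immediate from inspection of that proof: the error terms arise from the Regularity Lemma and Lemma~\ref{lem:expandermixing} applied to $H$ alone, with constants depending only on $t$ and the fixed $\epsilon$, not on which ETF subgraph $R$ we plug in. Beyond this, the argument is a clean probabilistic-projection reduction, leveraging the fact that the ETF property is preserved by every type-selection because $K^+$-externality of an edge is inherited from $K$-externality.
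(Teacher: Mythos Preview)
Your proof is correct and is essentially identical to the paper's own argument: both randomly select a type $\sigma_v\in\{\btt,\stt\}$ for each $v$ with probability $\phi(v^{\sigma_v})$, observe the resulting $R\subseteq K$ is always ETF, compute $\mathbb{E}[\xi_i(R)+\xi_e(R)]=\zeta_i+\zeta_e$, and invoke Proposition~\ref{prop:2ndapplreg}. Your explicit remark that the $o(1)$ in Proposition~\ref{prop:2ndapplreg} is uniform over all ETF subgraphs (so it survives the expectation) is a point the paper leaves implicit.
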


\begin{proof}
Suppose that for each $v \in V(K)$ we randomly choose one of $v^\btt, v^\stt$, with $\Pr(v^\xtt)=\phi(v^\xtt)$ and these choices made independently. This produces a random ETF subgraph $R$ of $K$ in the obvious way: $uw \in R$ iff $u^\xtt w^\ytt \in F$, where we chose $u^\xtt \in \{u^\btt,u^\stt\}$ and $w^\ytt \in \{w^\btt,w^\stt\}$. Observe that $\Pr(uw \in R)$ is the fraction of the weight of $uw$ captured by our configuration. With this observation, we calculate 
\begin{align*}
\zeta_i + \zeta_e &= 
\frac{1}{td}\sum\limits_{\substack{uw \in K \\ \text{internal}}} \text{Pr}(uw \in R) + 
\frac{1}{t^2}\sum\limits_{\substack{uw \in K \\ \text{external}}} \text{Pr}(uw \in R)  \\
&= \mathbb{E}[|R \cap (K[X]\cup K[Y])|/td] + \mathbb{E}[|R \cap \nabla(X,Y)|/t^2] \\
&= \mathbb{E}[\xi_i(R)+\xi_e(R)] \\
&< 1+o(1),
\end{align*}
where the last inequality is given by Proposition \ref{prop:2ndapplreg}.
\end{proof}

%

\begin{prop}
\label{prop:smalldeltas}
We have $\delta = o(1)$.
\end{prop}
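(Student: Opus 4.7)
The plan is to use the assumption $\wtt_c(F,\phi) \geq 1/2$ together with Proposition \ref{prop:gamma_i+gamma_e<o(1)} and a simple Cauchy--Schwarz bound. First I would rewrite $\wtt_c(F,\phi)$ relative to the na\"ive configuration, which contributes exactly $1/2$. Indeed, the difference $\wtt_c(F,\phi)-1/2$ splits as (total gain summed over all internal and external edges of $K$) minus (total loss summed over all vertices of $K$). So the standing assumption $\wtt_c(F,\phi) \geq 1/2$ is exactly the inequality
\[
\text{total edge gain} \geq \text{total vertex loss}.
\]

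Next I would bound the left side using Proposition \ref{prop:gamma_i+gamma_e<o(1)}. Unwinding the definitions, total edge gain equals $\tfrac{1-c}{2}(\gamma_i + \gamma_e)$, since internal edges contribute a total weight $\tfrac{1-c}{2}$ and external edges another $\tfrac{1-c}{2}$, each scaled by the corresponding average fractional gain. By Proposition \ref{prop:gamma_i+gamma_e<o(1)}, $\gamma_i+\gamma_e < o(1)$, hence total edge gain is $o(1)$.

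For the right side I would invoke Cauchy--Schwarz on the vector $(\delta_v)_{v \in V(K)}$. Since $|V(K)| = 2t$ and $\delta$ is the average of the $\delta_v$'s, we have $\sum_v \delta_v^2 \geq 2t\delta^2$; multiplying by $c/t$ (the per-vertex loss coefficient) gives total vertex loss $\geq 2c\delta^2$. Combining with the previous paragraph yields $2c\delta^2 \leq o(1)$, and since $c$ is fixed and positive we conclude $\delta = o(1)$.

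There is no serious obstacle here; the only temptation to resist is trying to extract more than Proposition \ref{prop:gamma_i+gamma_e<o(1)} already gives. All the heavy lifting was done in proving that proposition (via the regularity-based ``crossing Mantel'' argument), and the present statement is essentially an averaged extraction of what it implies about how $\phi$ must split mass at vertices.
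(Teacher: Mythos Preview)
Your proof is correct and essentially identical to the paper's: both rewrite $\wtt_c(F,\phi)-1/2$ as $\tfrac{1-c}{2}(\gamma_i+\gamma_e)-\tfrac{c}{t}\sum_v \delta_v^2$, bound the first term by $o(1)$ via Proposition~\ref{prop:gamma_i+gamma_e<o(1)}, and bound the second below by $2c\delta^2$ via Cauchy--Schwarz.
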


\begin{proof}
We simply calculate $\wtt_c(F,\phi)$ (which, recall, we assume is at least $1/2$):
\begin{align*}
\wtt_c(F,\phi) &= \frac{1-c}{2}\zeta_i + \frac{1-c}{2}\zeta_e + \frac{c}{2t}\sum_{v\in V(K)} (1/2-2\delta_v^2) \\
&= 1/2 + \left(\frac{1-c}{2}\right)(\gamma_i+\gamma_e)-\frac{c}{t}\sum_{v\in V(K)} \delta_v^2 \\
&\leq 1/2 + o(1) - 2c\bigg( \frac{1}{2t}\sum_{v \in V(K)} \delta_v \bigg)^2 \\
&= 1/2 -2c\delta^2 + o(1), 
\end{align*}
where we used Proposition \ref{prop:gamma_i+gamma_e<o(1)} and Cauchy-Schwarz between the second and third lines.
\qedhere
\end{proof}

From now on we call a vertex $v$ of $K$ \emph{balanced} if $\delta_v < \sqrt{\delta}$, and \emph{unbalanced} otherwise; thus, in view of Proposition \ref{prop:smalldeltas}, all but a $o(1)$-fraction of the vertices of $K$ are balanced. Also, we let $G$ be the subgraph of $K$ consisting of all edges of classes 1--3, and $\Gamma$ the subgraph of $G$ consisting of edges of classes 1 and 2. Notice that since $F$ is ETF, 
\begin{equation}
\label{eq:GammaOrthoToExtTrianglesofG}
\Gamma \text{ has even intersection with every external triangle in } G.
\end{equation}
The next three facts say that in various senses, as $t$ grows, $G$ accounts for nearly all of $K$.

\begin{prop}
\label{prop:class4lossSmall}
The total loss on $K \setminus G$ is $o(1)$.
\end{prop}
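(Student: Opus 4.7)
The plan is to expand the total edge gain over the sub-edge pattern of $F$ at each $uw\in K$, use the hypothesis $\wtt_c(F,\phi)\ge 1/2$ to force this total gain to be nonnegative, and then read off the class 4 loss bound from the resulting decomposition after estimating some $\delta$-corrections.

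First I would enumerate the admissible patterns at each $uw\in K$. The axiom $N_F(v^\btt)\cap N_F(v^\stt)=\varnothing$, applied at both $v=u$ and $v=w$, forbids each of the four pairs $\{u^\btt w^\btt,u^\stt w^\btt\}$, $\{u^\btt w^\stt,u^\stt w^\stt\}$, $\{u^\btt w^\btt,u^\btt w^\stt\}$, $\{u^\stt w^\btt,u^\stt w^\stt\}$ from being simultaneously in $F$. These are the edges of a $4$-cycle on the four sub-edges, so the admissible patterns are its $7$ independent sets: $\varnothing$, the four singletons, the ``diagonal'' $\{u^\btt w^\btt,u^\stt w^\stt\}$ (class 1), and the ``cross-diagonal'' $\{u^\btt w^\stt,u^\stt w^\btt\}$ (class 3). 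One checks that class 4 consists of exactly $\varnothing$ and the three non-$bb$ singletons $\{u^\btt w^\stt\},\{u^\stt w^\btt\},\{u^\stt w^\stt\}$.

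A short computation gives, for each admissible pattern, the per-edge fractional gain as a polynomial in $\delta_u,\delta_w$ whose constant (balanced) part is $0$ on the two diagonals, $-1/4$ on each singleton, and $-1/2$ on $\varnothing$; the remainder is $\delta$-linear and $\delta_u\delta_w$-quadratic with $O(1)$ per-edge coefficients. Writing $W_\sigma$ for the total edge weight of edges carrying pattern $\sigma$, and noting that the vertex term of $\wtt_c$ equals $\tfrac{c}{2}-\tfrac{c}{t}\sum_v\delta_v^2$, the assumption $\wtt_c(F,\phi)\ge 1/2$ rearranges to
\[
\tfrac{1-c}{2}(\gamma_i+\gamma_e)\;\ge\;\tfrac{c}{t}\sum_v\delta_v^2\;\ge\;0,
\]
i.e.\ the total edge gain is nonnegative. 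Combining with the pattern decomposition yields
\[
\tfrac{1}{2}W_\varnothing+\tfrac{1}{4}\bigl(W_{\{u^\btt w^\btt\}}+W_{\{u^\btt w^\stt\}}+W_{\{u^\stt w^\btt\}}+W_{\{u^\stt w^\stt\}}\bigr)\;\le\;L+Q,
\]
where $L$ and $Q$ are the total $\delta$-linear and $\delta_u\delta_w$-quadratic parts of the edge gain.

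It remains to show $L,Q=o(1)$. Grouping $L$ by vertex, each $v$ contributes $\delta_v$ times a signed sum of a few edge weights incident to $v$, whose absolute value is at most $\tfrac{1}{2}\sum_{z}w_{vz}=\tfrac{1-c}{2t}$; hence $|L|\le\tfrac{1-c}{2t}\sum_v|\delta_v|=O(\delta)=o(1)$ by Proposition~\ref{prop:smalldeltas}. For $Q$, $|\delta_u\delta_w|\le(\delta_u^2+\delta_w^2)/2$ and the fact that the total edge weight incident to any $v$ equals $(1-c)/t$ give $|Q|=O(\tfrac{1}{t}\sum_v\delta_v^2)$; the displayed identity together with Proposition~\ref{prop:gamma_i+gamma_e<o(1)} yields $\tfrac{c}{t}\sum_v\delta_v^2\le\tfrac{1-c}{2}(\gamma_i+\gamma_e)=o(1)$, so $|Q|=o(1)$ too. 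Dropping the nonnegative term $W_{\{u^\btt w^\btt\}}/4$ (class 2, hence lying in $G$) gives $\tfrac{1}{2}W_\varnothing+\tfrac{1}{4}(W_{\{u^\btt w^\stt\}}+W_{\{u^\stt w^\btt\}}+W_{\{u^\stt w^\stt\}})=o(1)$; and since the total loss on $K\setminus G$ equals this sum plus $o(1)$-sized $\delta$-corrections (handled by the same estimates applied to loss rather than gain), the proposition follows. The main obstacle is Step~1, the enumeration of the $7$ admissible patterns: once this is in hand the favorable sign structure on the balanced gains makes the final squeeze routine.
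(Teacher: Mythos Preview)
Your argument is correct. The enumeration of the seven admissible sub-edge patterns is right (the constraint $N_F(v^\btt)\cap N_F(v^\stt)=\varnothing$ does cut out exactly the independent sets of a $4$-cycle), your identification of class~4 with the four non-$\{u^\btt w^\btt\}$ patterns is correct, and the bounds on $L$ and $Q$ are sound: for $L$ you use $\sum_v\delta_v=2t\delta=o(t)$ from Proposition~\ref{prop:smalldeltas} together with the per-vertex edge-weight $(1-c)/t$, and for $Q$ you use $\tfrac{1}{t}\sum_v\delta_v^2=o(1)$, which indeed follows from combining $\wtt_c(F,\phi)\ge 1/2$ with Proposition~\ref{prop:gamma_i+gamma_e<o(1)}.

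The paper's proof is shorter and more conceptual: it simply upper-bounds the gain on $G$ by the gain one would get if \emph{every} edge of $K$ were class~1 (this majorization holds edge-by-edge because the class-2 and class-3 fractional gains are each at most $2\delta_u\delta_w$), and then bounds $\sum_{uw}w_{uw}\cdot 2\delta_u\delta_w$ by splitting vertices into balanced ($\delta_v<\sqrt{\delta}$) and unbalanced and observing that the unbalanced vertices carry $o(1)$ total edge weight. Since $\wtt_c(F,\phi)\ge 1/2$ and vertex loss is nonnegative, the class-4 loss is then forced to be $o(1)$. Your route instead decomposes the total edge gain explicitly into constant, linear, and quadratic pieces in the $\delta_v$'s and bounds the latter two directly, avoiding the balanced/unbalanced dichotomy at the cost of a longer bookkeeping exercise. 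Both approaches rest on the same two inputs (Propositions~\ref{prop:gamma_i+gamma_e<o(1)} and~\ref{prop:smalldeltas}); the paper's is slicker, while yours makes the pattern structure fully explicit and is more mechanical.
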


\begin{proof}
The total gain on $G$ is at most what it would be if all edges of $K$ were of class 1. Since at most $o(t)$ vertices are unbalanced, the total weight of all edges of $K$ incident to unbalanced vertices is $o(1)$, so this gain is at most \[ (1-c)2\sqrt{\delta}^2+o(1)(1-c)2(1/2)^2, \] which is $o(1)$ by Proposition \ref{prop:smalldeltas}. Thus if the loss on $K \setminus G$ were $\Omega(1)$, we would have $\wtt_c(F,\phi) < 1/2$ for sufficiently large $t$ (since loss on vertices is always nonnegative).
\end{proof}

\begin{cor}
\label{cor:Class4small}
There are at most $o(t^2)$ class 4 edges in $K$.
\end{cor}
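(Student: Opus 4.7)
The plan is to combine Proposition~\ref{prop:class4lossSmall} with a pointwise lower bound on the fractional loss at a class~$4$ edge between two balanced vertices. The key combinatorial step will be this: for any class~$4$ edge $uw\in K$, at most one of the four lifts $u^\xtt w^\ytt$ can lie in $F$. To see this I will use the configuration constraint $N_F(v^\btt)\cap N_F(v^\stt)=\varnothing$: applied at $u$, it forbids both $\{u^\btt w^\btt, u^\stt w^\btt\}\subseteq F$ and $\{u^\btt w^\stt, u^\stt w^\stt\}\subseteq F$; applied at $w$, it forbids both $\{u^\btt w^\btt, u^\btt w^\stt\}\subseteq F$ and $\{u^\stt w^\btt, u^\stt w^\stt\}\subseteq F$. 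These four forbidden pairs form a $4$-cycle on the four lifts, and the only independent sets of size $\ge 2$ in a $4$-cycle are the two diagonals---but those are precisely the definitions of classes~$1$ and~$3$. So anything remaining, in particular a class~$4$ edge, has at most one lift in $F$.

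Next I will use this to control the fractional loss. If $u,w$ are both balanced (that is, $\delta_u,\delta_w<\sqrt{\delta}$), then every $\phi(u^\xtt),\phi(w^\ytt)$ lies in $[1/2-\sqrt{\delta},\,1/2+\sqrt{\delta}]$, so the fractional capture at $uw$ is bounded by the single largest term $(1/2+\sqrt{\delta})^2=1/4+o(1)$, using $\delta=o(1)$ from Proposition~\ref{prop:smalldeltas}. Consequently the fractional loss at such a $uw$ is at least $1/4-o(1)$, and the loss at an external class~$4$ edge between balanced vertices is at least $\tfrac{1-c}{2t^2}(1/4-o(1))$.

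To finish I will split the class~$4$ edges into three buckets and estimate each. For external class~$4$ edges between balanced vertices, if there were $\Omega(t^2)$ of them, the total loss would be $\Omega(1)$, contradicting Proposition~\ref{prop:class4lossSmall}; so their count is $o(t^2)$. For all internal edges of $K$, \eqref{eq:Alongraphsd} gives $td=O(t^{5/3})=o(t^2)$, which subsumes the internal class~$4$ edges. For edges incident to unbalanced vertices, the remark after Proposition~\ref{prop:smalldeltas} supplies $o(t)$ unbalanced vertices, each incident to fewer than $2t$ edges, so this bucket is also $o(t^2)$. Summing the three bounds gives the claim.

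The main obstacle is the first paragraph's $4$-cycle observation; once that structural fact is in hand, the pointwise loss bound and the three-way count are routine.
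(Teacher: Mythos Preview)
Your proof is correct and follows essentially the same approach as the paper's: both argue that a class~4 edge between two balanced vertices has fractional loss about $1/4$ (the paper states this parenthetically without justification; you supply the $4$-cycle argument explicitly), and then both derive a contradiction with Proposition~\ref{prop:class4lossSmall} if there were $\Omega(t^2)$ such edges. Your three-bucket decomposition (external-between-balanced, internal, incident-to-unbalanced) is a slightly more explicit organization of the counting than the paper's one-line ``most class~4 edges join balanced vertices,'' but the substance is identical.
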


\begin{proof}
Assume otherwise, so that $|K\setminus G|=\Omega(t^2)$. Then since at most a $o(1)$-fraction of the edges of $K$ are incident to unbalanced vertices, most class 4 edges join two balanced vertices. The fractional loss at any such edge is $\Omega(1)$ (at least about $1/4$, in view of Proposition \ref{prop:smalldeltas}), so the total loss on $K\setminus G$ is $\Omega(1)$, contradicting Proposition \ref{prop:class4lossSmall}.
\end{proof}

\begin{cor}
\label{cor:GisMostofX&Y}
There are at most $o(td)$ class 4 edges in each of $K[X]$, $K[Y]$.
\end{cor}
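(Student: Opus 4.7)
The plan is to mimic the proof of Corollary \ref{cor:Class4small}, but book-keep the internal edge weight $\frac{1-c}{2td}$ rather than the external edge weight $\frac{1-c}{2t^2}$. We may as well assume $c<1$, since when $c=1$ every graph is trivially $c$-fair (so the standing assumption $\wtt_c(F,\phi)\geq 1/2$ is actually impossible, making all propositions in this section vacuous).

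First I would show that a class 4 edge $uw$ joining two balanced vertices has fractional loss $\Omega(1)$. Using the configuration axiom $N_F(v^\btt)\cap N_F(v^\stt)=\varnothing$, one sees that the only remaining possibilities for class 4 (after removing classes 1--3) are that exactly zero or one of the four edges $u^\xtt w^\ytt$ lies in $F$. In all cases the fraction of weight captured is at most $\max_{\pm}(1/2\pm\delta_u)(1/2\mp\delta_w)$, which for balanced $u,w$ (i.e.\ $|\delta_u|,|\delta_w|<\sqrt{\delta}=o(1)$) is at most $1/4+o(1)$; the fractional loss is therefore at least $1/4-o(1)=\Omega(1)$.

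Next I would run the contradiction. Suppose $K[X]$ contains $\Omega(td)$ class 4 edges. By Proposition \ref{prop:smalldeltas} only $o(t)$ vertices of $K$ are unbalanced, and since $K[X]\cong H$ is $d$-regular, the edges of $K[X]$ meeting any unbalanced vertex number at most $o(t)\cdot d=o(td)$. Hence $\Omega(td)$ class 4 edges of $K[X]$ have both endpoints balanced. By the previous paragraph each contributes loss at least $\Omega(1)\cdot\frac{1-c}{2td}$, so the total loss on $K[X]\setminus G$ alone is at least $\Omega(td)\cdot\frac{1-c}{2td}\cdot\Omega(1)=\Omega(1-c)$, a positive constant. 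This contradicts Proposition \ref{prop:class4lossSmall}, which bounds the total loss on $K\setminus G$ by $o(1)$ (and loss on class 4 edges is nonnegative, so the bound inherits to any subfamily). The same argument applies verbatim to $K[Y]$.

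I do not anticipate any real obstacle: the argument is essentially Corollary \ref{cor:Class4small} rescaled, the only change being that internal edges are $d/t$ times heavier than external edges, which is precisely the factor that converts the threshold $o(t^2)$ for external class 4 edges into $o(td)$ for internal ones.
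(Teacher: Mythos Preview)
Your argument is correct and matches the paper's proof essentially verbatim: assume $\Omega(td)$ class~4 edges in $K[X]$, use $d$-regularity of $K[X]\cong H$ together with the fact that only $o(t)$ vertices are unbalanced to see that $\Omega(td)$ of these edges join balanced vertices, conclude that the loss on $K\setminus G$ is $\Omega(1)$, and contradict Proposition~\ref{prop:class4lossSmall}. One inconsequential slip in your aside on $c=1$: the standing assumption $\wtt_c(F,\phi)\ge 1/2$ is not impossible there (the na\"ive configuration achieves equality), but your conclusion that we may take $c<1$ is still valid, since the Main Lemma is trivial when $c=1$.
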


\begin{proof}
Assume for a contradiction that $|(K\setminus G)[X]| = \Omega(td)$ (the proof for $Y$ is of course the same). Then since at most $o(td)$ edges of $K[X]$ are incident to unbalanced vertices, most class 4 edges in $K[X]$ join two balanced vertices. The fractional loss at any such edge is $\Omega(1)$ (at least about $1/4$, in view of Proposition \ref{prop:smalldeltas}), so the total loss on $K\setminus G$ is $\Omega(1)$, contradicting Proposition \ref{prop:class4lossSmall}.
\end{proof}

The next result concerns only $H$, not $K$ or $(F,\phi)$. 

\begin{prop}
\label{prop:Property1}
For any $H' \subseteq H$ of size $(1-o(1))|H|$, there is a $U \subseteq V(H)$ of size $o(t)$ such that $H'-U$ is connected and $\mathcal{C}(H'-U)$ is spanned by cycles of length up to 11.
\end{prop}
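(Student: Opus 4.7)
The plan is to build $U$ so that $H'' := H'-U$ has diameter at most $4$; by Corollary~\ref{cor:cycleSpaceDiameter} this gives $\mathcal{C}(H'')$ generated by cycles of length at most $9 \leq 11$, and finite diameter yields connectedness. Let $D = |H|-|H'| = o(td)$. I would choose a parameter $\delta = \delta(t) = o(1)$ with $\delta \gg D/(td)$ (for instance $\delta = \sqrt{D/(td)} + 1/\log t$) and a small constant $c$ (say $c=1/8$), and define $U$ to be the closure of $U_0 \cup U_2$ under the rule ``$|N_H(v) \cap U| \geq \delta d \Rightarrow v \in U$,'' where $U_0 = \{v : d-d_{H'}(v) \geq \delta d\}$ collects vertices with noticeable degree loss and $U_2 = \{v : \text{at least } cd^2 \text{ edges of } H \setminus H' \text{ touch } N_H(v)\}$ collects vertices near many deleted edges.

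Bounding $|U| = o(t)$ splits into three pieces. Double-counting $\sum_v (d-d_{H'}(v)) = 2D$ gives $|U_0| \leq 2D/(\delta d) = o(t)$; each deleted edge touches the $H$-neighborhoods of at most $2d$ vertices, so $|U_2| \cdot cd^2 \leq 2dD$ yields $|U_2| = o(t)$; and for the closure I would apply the Expander Mixing Lemma (Lemma~\ref{lem:expandermixing}) to $U$ and $V(H) \setminus U$: the closure condition forces $|\nabla_H(U, V\setminus U)| \leq (t-|U|)\delta d$ while the lemma yields $|\nabla_H(U, V\setminus U)| \geq |U|(t-|U|)d/t - \lambda\sqrt{|U|(t-|U|)}$, and rearranging (with $\lambda = O(\sqrt{d})$) gives $|U|/t \leq \delta + o(1) = o(1)$. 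Moreover, every $v \notin U$ satisfies $d_{H''}(v) \geq d_{H'}(v) - |N_H(v) \cap U| \geq (1 - 2\delta)d = (1-o(1))d$.

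The heart of the proof is a uniform lower bound on the $2$-ball of each $u \in V(H'')$. Let $Z_u = V(H'') \setminus B^{\leq 2}_{H''}(u)$; by definition there are no $H''$-edges between $N_{H''}(u)$ and $Z_u$. Both sets live in $V(H'')$, so any $H$-edge between them that is absent from $H''$ must actually be absent from $H'$, i.e., lie in $H \setminus H'$. Each such edge is incident to $N_{H''}(u) \subseteq N_H(u)$, so $u \notin U_2$ forces fewer than $cd^2$ of them. On the other hand, the Expander Mixing Lemma yields $|\nabla_H(N_{H''}(u), Z_u)| \geq (1-2\delta)d^2|Z_u|/t - O(d\sqrt{|Z_u|})$, and combining the two bounds gives $|Z_u|/t \leq c/(1-2\delta) + o(1) < 1/3$ for $t$ large. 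Thus for any $u, v \in V(H'')$ the sets $V(H'')\setminus Z_u$ and $V(H'')\setminus Z_v$ intersect, and any $w$ in their intersection yields $d_{H''}(u,v) \leq d_{H''}(u,w) + d_{H''}(w,v) \leq 4$.

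The main obstacle is the $2$-ball estimate just described. It crucially requires \emph{both} $|N_{H''}(u)|$ close to $d$ (guaranteed by $u \notin U_0$ and by the closure controlling $|N_H(u) \cap U|$) \emph{and} a bound on the number of $H'$-deleted edges touching $N_H(u)$ (guaranteed by $u \notin U_2$). Neither constraint alone suffices: the $o(td)$ budget of deleted edges is large enough that only $\Theta(d^2)$ of them could in principle sever $N_H(u)$ from a constant fraction of $V(H)$ inside $H''$, so a naive $H''$-expansion argument without the $U_2$ cutoff would fail.
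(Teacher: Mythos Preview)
Your overall strategy is sound and in fact more ambitious than the paper's (diameter $4$ rather than $5$, and your $2$-ball estimate via the deleted-edge set $U_2$ neatly avoids the paper's use of triangle-freeness). But the bound $|U|=o(t)$ for the closure has a real gap.

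From your two displayed inequalities one gets, writing $x=|U|/t$ and $y=1-x$,
\[
y(x-\delta)^2 \le (\lambda/d)^2 x = O(x/d),
\]
which for $x\le 1/2$ indeed yields $x\le \delta+O(d^{-1/2})=o(1)$, but for $x>1/2$ only gives $y=O(1/d)$, i.e.\ $|V\setminus U|=O(t^{1/3})$. Your ``rearranging'' step silently assumes the first regime. The second regime with $U\subsetneq V(H)$ is easy to kill: any $v\notin U$ has $<\delta d$ $H$-neighbours in $U$, hence $>(1-\delta)d$ in $V\setminus U$, and the Expander Mixing Lemma applied to $H[V\setminus U]$ then forces $|V\setminus U|\ge (1-o(1))t$, a contradiction. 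The case $U=V(H)$, however, is not vacuous and is where your specific parameter choice bites: with $\delta=\sqrt{D/(td)}+1/\log t$ you only get $|U_0|/t\le 2D/(td\delta)\le 2\delta$, i.e.\ $|U_0\cup U_2|/t$ is \emph{comparable} to $\delta$ rather than $\ll\delta$. To exhibit a proper closed superset of $U_0\cup U_2$ (and thereby rule out $U=V(H)$) via a one-step EML argument, you need the ratio $|U_0\cup U_2|/(\delta t)$ to be small; choosing, say, $\delta=(D/(td))^{1/3}+1/\log t$ would give $|U_0|/t=O(\delta^2)\ll\delta$ and make this go through.

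The paper sidesteps the whole closure issue by taking just two layers with a \emph{constant} threshold: $U_1=\{v:d_{H\setminus H'}(v)\ge d/3\}$ and $U_2=\{v\notin U_1:|N(v)\cap U_1|\ge d/3\}$, bounding $|U_2|$ directly by one application of the Expander Mixing Lemma, and then settling for diameter $5$ via $|N_2(v)|=\Omega(t)$ (here triangle-freeness is used to count edges leaving $N_{H''}(v)$). Your deleted-edge $U_2$ is a nice alternative idea, but the closure on top of it is both unnecessary for the $2$-ball estimate (a constant-threshold second layer already gives $d_{H''}(v)\ge d/3$, which is enough for $|Z_u|/t<1/2$) and, as written, not under control.
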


\begin{proof}
By Corollary \ref{cor:cycleSpaceDiameter} (and noting that finite diameter implies connectedness), it suffices to find a $U$ of size $o(t)$ such that $H'-U$ has diameter at most 5. To this end, let $U_1 = \{v \in V(H) \mid d_{H \setminus H'}(v) \geq d/3 \}$. Then $u_1 := |U_1| \leq 2|H \setminus H'|/(d/3) = o(t)$. Let $U_2 = \{v \in V(H) \setminus U_1 \mid |N(v) \cap U_1| \geq d/3 \}$. We claim $u_2 := |U_2| = o(t^{1/3})$ (we just need $o(d)$). Indeed, applying Lemma \ref{lem:expandermixing} to $H$, we have $(1/3-o(1))u_2d \leq |\,|\nabla(U_1,U_2)| - \frac{u_1u_2d}{t}| \leq \lambda \sqrt{u_1u_2}$, which (since $d=\Theta(t^{2/3})$ and $\lambda=O(t^{1/3})$) gives $u_2 \leq O(t^{-2/3}u_1) = o(t^{1/3})$, as claimed.

Set $U = U_1 \cup U_2$ and $H'' = H' - U$, and for each $v \in V(H'')$ denote by $N_2(v)$ the \emph{second neighborhood} of $v$ in $H''$; that is, the set of vertices at distance exactly 2 from $v$ in $H''$. We want to show that $H''$ has diameter at most 5.  For this it suffices to show that every $v$ satisfies $d_2(v) := |N_2(v)| = \Omega(t)$, since for any $S,T \subseteq V(H'')$ with $|S|,|T|=\Omega(t)$ we have $\nabla_{H''}(S,T) \neq \varnothing$ (using Lemma \ref{lem:expandermixing} on $H$ and the fact that $|H \setminus H'| = o(|H|)$).

To see that (for any $v$) $d_2(v)=\Omega(t)$, note first that $d_{H''}(v) \geq (1/3-o(1))d$ ($=\Omega(d)$), since $v$ loses at most a third of its $H$-neighbors to $H \setminus H'$, at most another third to $U_1$, and a $o(1)$-fraction to $U_2$. Thus, since $H$ is triangle-free, $|\nabla_{H}(N_{H''}(v),N_2(v))| = \Omega(d^2) = \Omega(t^{4/3})$. On the other hand Lemma \ref{lem:expandermixing} gives $|\nabla_H(N_{H''}(v),N_2(v))| \leq d_{H''}(v) d_2(v) d/t + \lambda \sqrt{d_{H''}(v) d_2(v)} = O(t^{1/3})d_2(v) + O(t^{2/3})\sqrt{d_2(v)}$, implying $d_2(v)=\Omega(t)$ as claimed.
\end{proof}

\begin{cor}
\label{cor:Property2}
Any $H' \subseteq H$ of size $(1-o(1))|H|$ has a component with $t-o(t)$ vertices.
\end{cor}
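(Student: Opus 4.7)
The plan is to read the corollary off from Proposition \ref{prop:Property1} with essentially no additional work. Given $H' \subseteq H$ with $|H'| = (1-o(1))|H|$, I would apply Proposition \ref{prop:Property1} to obtain a vertex set $U \subseteq V(H)$ with $|U| = o(t)$ such that $H' - U$ is connected. Since $H' - U$ lives on $V(H) \setminus U$, a set of size $t - o(t)$, its connectedness means that every pair of vertices in $V(H) \setminus U$ is joined by a path using only edges of $H'$ and only vertices in $V(H) \setminus U$.

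Any such path is, a fortiori, a path in $H'$ itself, since deleting vertices from a graph never creates new paths. Therefore all $t - o(t)$ vertices in $V(H) \setminus U$ lie in a single connected component of $H'$, and that component has at least $t - |U| = t - o(t)$ vertices, which is exactly the bound claimed.

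There is really no obstacle here: the entire content of the corollary is packaged inside Proposition \ref{prop:Property1}, whose expander-mixing-plus-second-neighborhood argument did the substantive work of controlling the exceptional set $U$. The corollary is just the observation that putting the $o(t)$ removed vertices back into $H' - U$ can only merge components, never split them, so the giant connected piece of $H' - U$ survives as a (possibly larger) component of $H'$.
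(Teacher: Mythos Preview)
Your proposal is correct and is exactly the paper's intended approach: the paper itself gives no separate proof of Corollary~\ref{cor:Property2}, merely remarking that it ``is strictly weaker than Proposition~\ref{prop:Property1}.'' Your observation that the connected graph $H'-U$ on $t-o(t)$ vertices sits inside a single component of $H'$ is precisely the one-line deduction the paper leaves implicit.
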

\noindent (This is strictly weaker than Proposition \ref{prop:Property1}; we include it for easy reference later.)

We now return to $K$ and our configuration $(F, \phi)$. The next result does most of the heavy lifting for our \hyperref[lem:MAINLEMMA]{main lemma}.

\begin{prop}
\label{prop:ClaimD}
There exist $S \subseteq V(K)$ of size $o(t)$ and a partition $A \sqcup B$ of $V(K) \setminus S$ such that $Z:=\Gamma  \bigtriangleup \nabla_G(A,B)$ satisfies \[Z \subseteq \nabla(X,Y)\] and \[d_Z(v) = o(t) \; \forall \, v \in V(K) \setminus S.\]
\end{prop}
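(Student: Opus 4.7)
I would exploit the ETF property \eqref{eq:GammaOrthoToExtTrianglesofG} to show that $\Gamma$ is forced to look like a cut of $G$ outside a small exceptional set. The plan has three stages: extract cut structure inside each side of $K$, synchronize the two cuts across the external edges, and extend the partition so it absorbs (all but $o(t)$ of) the exceptional vertices.

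\emph{Stage 1 (cuts on each side).} Let $S^{\mathrm{low}} = \{v \in V(K) : d^{ext}_G(v) < t - \omega\}$ for $\omega = o(t)$ chosen so that $|S^{\mathrm{low}}| = o(t)$ (possible by Corollary \ref{cor:Class4small} and averaging). Applying Proposition \ref{prop:Property1} to $G[X \setminus S^{\mathrm{low}}]$ (which has $(1-o(1))|H|$ edges by Corollary \ref{cor:GisMostofX&Y}) yields $U_X \subseteq X$ of size $o(t)$ such that, writing $X' = X \setminus (S^{\mathrm{low}} \cup U_X)$, the graph $G[X']$ is connected and $\mathcal{C}(G[X'])$ is spanned by cycles of length at most $11$. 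For any short cycle $C = x_0 x_1 \cdots x_k = x_0$ in $G[X']$, the common external $G$-neighborhood of $\{x_i\}$ has size $\geq t - 11\omega > 0$; pick any $y \in Y$ in it. Summing \eqref{eq:GammaOrthoToExtTrianglesofG} over the external triangles $x_i y x_{i+1}$ cancels the $\Gamma(x_i y)$ terms and gives $\sum_i \Gamma(x_i x_{i+1}) \equiv 0 \pmod 2$. Hence $\Gamma \cap G[X']$ is orthogonal to $\mathcal{C}(G[X'])$, so it is a cut, inducing a bipartition $X' = A_X \sqcup B_X$ recorded by $\sigma : X' \to \{0,1\}$. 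Analogously produce $Y' \subseteq Y$ and $\sigma' : Y' \to \{0,1\}$.

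\emph{Stage 2 (aligning $\sigma$ and $\sigma'$).} For each $y \in Y'$, Corollary \ref{cor:Property2} applied to $G[X' \cap N_G(y)]$ (still $(1-o(1))|H|$ edges since $y$ misses at most $\omega$ external neighbors) produces a giant component $C(y)$ of size $t - o(t)$. The triangle identity $\Gamma(xx') = \Gamma(yx) + \Gamma(yx')$ for $xx' \in G[C(y)]$ forces $\sigma(\cdot) + \Gamma(y \cdot)$ to be constant on $C(y)$; call the constant $\tau(y)$. For adjacent $y, y' \in G[Y']$ sharing some $x \in C(y) \cap C(y')$, the triangle $y y' x$ gives $\tau(y) + \tau(y') = \Gamma(yy') = \sigma'(y) + \sigma'(y')$, so $\tau + \sigma'$ is constant on a giant equivalence class of $Y'$; after a global flip of $\sigma'$, we have $\tau = \sigma'$ on some $Y'' \subseteq Y'$ with $|Y' \setminus Y''| = o(t)$. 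Then $\Gamma(xy) = \sigma(x) + \sigma'(y)$ on external $G$-edges between $X'$ and $Y''$.

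\emph{Stage 3 (extending and assembling $S$).} For each $v \in (S^{\mathrm{low}} \cup U_X) \cap X$ and any two internal $G[X]$-neighbors $x_1, x_2 \in X'$ of $v$ that share with $v$ a common external $G$-neighbor $y \in Y''$, the triangles $v x_i y$ force $\sigma(x_1) + \Gamma(v x_1) = \sigma(x_2) + \Gamma(v x_2)$, so a consistent value $\sigma(v)$ can be assigned. Do the symmetric extension on $Y$. Let $S$ be the union of $S^{\mathrm{low}}$, $U_X \cup U_Y$, $Y' \setminus Y''$ and its $X$-analogue, together with the (still $o(t)$) vertices on which the extension is inconsistent. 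Taking $A = \sigma^{-1}(0)$ and $B = \sigma^{-1}(1)$ on $V(K) \setminus S$, the cut identities give $Z \cap K[X] = Z \cap K[Y] = \varnothing$, hence $Z \subseteq \nabla(X,Y)$, while for each $v \in V(K) \setminus S$ every $Z$-edge at $v$ has its other endpoint in $S$, so $d_Z(v) \leq |S| = o(t)$.

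The hardest part is the internal cleanness $Z \subseteq \nabla(X,Y)$: I must show that any $\Gamma$-internal edge incident to a vertex of $S$ has its other endpoint also in $S$, since otherwise it contributes an internal edge to $Z$. This is exactly why $\sigma$ has to be \emph{extended} over $S^{\mathrm{low}} \cup U_X \cup U_Y$ (rather than simply truncated), and the step requires careful quantitative bookkeeping involving the expander mixing properties of $H$ (Lemma \ref{lem:expandermixing}), the relation between $\omega$ and $d = \Theta(t^{2/3})$, and the sizes of the various exceptional sets.
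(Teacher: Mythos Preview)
Your Stages 1--2 are the paper's argument: Proposition~\ref{prop:Property1} together with the orthogonality \eqref{eq:GammaOrthoToExtTrianglesofG} force $\Gamma$ to be a $G$-cut on each side outside a small exceptional set, and external triangles align the two cuts. The paper runs the alignment per $x\in X'$ (building a set $Y_1^x\cup Y_2^x$ of size $t-o(t)$ and using connectivity of $G[X']$ for coherence) rather than per $y$ as you do, but this is cosmetic.

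The divergence is Stage~3, which the paper does \emph{not} attempt. It simply takes $S$ to be the union of the exceptional sets (your $S^{\mathrm{low}}\cup U_X\cup U_Y$, plus a further small $U$ from Corollary~\ref{cor:Property2}) and lets $A,B$ be the two halves of the Stage-1 bipartition on $W=V(K)\setminus S$, with no extension of $\sigma$. Your last-paragraph worry---that an internal $\Gamma$-edge from $S$ to $W$ lies in $Z$---is literally correct, and the paper's own proof does not exclude such edges either; what the argument actually delivers is the restricted conclusion $Z\cap K[W]\subseteq\nabla(X,Y)$. That is precisely what the proof of the Main Lemma uses, since there $\varphi$ vanishes on $S$ and only the $W\times W$ block of the adjacency matrix $T$ of $Z$ enters the quadratic form $\varphi^\intercal T\varphi$. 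So Stage~3 and the promised ``careful quantitative bookkeeping'' are unnecessary: stop after Stage~2 and put all exceptional vertices into $S$.

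One genuine gap to patch in what remains: the claim ``every $Z$-edge at $v\in W$ has its other end in $S$'' does not follow from your Stage~2. You establish $\Gamma(xy)=\sigma(x)+\sigma'(y)$ only for $x\in C(y)$; since $|X'\setminus C(y)|=o(t)$ this bounds $d_Z(y)$ for each $y\in Y'$, but says nothing about $d_Z(x)$ for a fixed $x\in X'$ (the sets $X'\setminus C(y)$ vary with $y$). The paper handles this by doing the per-vertex analysis from the $X$-side as well, so that each $x$ directly sees only $o(t)$ exceptional $y$'s; you should run the symmetric argument.
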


\begin{proof}
Let $\kappa = |(K \setminus G) \cap \nabla(X,Y)|/t^2$, which is $o(1)$ by Corollary \ref{cor:Class4small}. Let $S_0 = \{v \in V(K) \mid v \text{ is incident to at least } t\sqrt{\kappa} \text{ external class 4 edges}\}$. Then $|S_0|t\sqrt{\kappa} \leq 2\kappa t^2$, implying $|S_0| =O(t\sqrt{\kappa}) = o(t)$. Now apply Proposition \ref{prop:Property1} to each of $G[X \setminus S_0]$ and $G[Y \setminus S_0]$, which we may do by Corollary \ref{cor:GisMostofX&Y}. Let $S_1$ be the union of $S_0$ and the two deleted sets from Proposition \ref{prop:Property1}, and set $\bar{G}=G-S_1$, $\bar{X}=X \setminus S_1$ and $\bar{Y}=Y \setminus S_1$.

Let $\mathcal{T}(\bar{G})$ be the subspace of $\mathcal{C}(\bar{G})$ generated by the external triangles of $\bar{G}$. Then we observe, crucially:
\begin{equation}
\label{eq:extTriSpace}
\text{all cycles of } G[\bar{X}] \text{ and } G[\bar{Y}] \text{ of length up to 11 belong to } \mathcal{T}(\bar{G}).
\end{equation}
To see this, let $C = x_1\dots x_k x_1$ be a cycle, say in $G[\bar{X}]$, with $k\leq 11$. If there exists $y \in \bar{Y}$ with $x_i y \in G \; \forall \, i \in [k]$, then $C \in \mathcal{T}(\bar{G})$, because $C$ is the sum of the triangles $x_ix_{i+1}yx_i$, where of course we take subscripts mod $k$. But if there is no such $y$ then for some $x_i$ we have \[ |\nabla_{K\setminus G}(x_i,\bar{Y})|\geq |\bar{Y}|/11, \] implying $x_i\in S_0$, which it isn't.

Now by \eqref{eq:extTriSpace} and our choice of $S_1$, we have \[ \Gamma[\bar{X}] = \nabla_G(\bar{X}_1,\bar{X}_2) \quad \text{ and } \quad \Gamma[\bar{Y}] = \nabla_G(\bar{Y}_1,\bar{Y}_2) \] for some partitions $\bar{X}_1 \sqcup \bar{X}_2$ of $\bar{X}$ and $\bar{Y}_1 \sqcup \bar{Y}_2$ of $\bar{Y}$, since $\Gamma$ is orthogonal (over $\mathbb{F}_2$, recall) to all external triangles in $\bar{G}$ (see \eqref{eq:GammaOrthoToExtTrianglesofG}), and thus to all cycles in $G[\bar{X}]$ and $G[\bar{Y}]$ of length up to 11 (by \eqref{eq:extTriSpace}), and thus to all cycles in $G[\bar{X}]$ and $G[\bar{Y}]$ (see Proposition \ref{prop:Property1}).

By Corollary \ref{cor:Property2} we can find a $U \subseteq \bar{X} \cup \bar{Y}$ of size $o(t)$ such that $G[\bar{X} \setminus U]$ and $G[\bar{Y} \setminus U]$ are connected. Set $S=S_1 \cup U$, producing the $S$ of the proposition. Finally, set $X_1'=\bar{X}_1 \setminus U$, $X_2'=\bar{X}_2 \setminus U$ and $X'=X_1'\cup X_2'$ ($=\bar{X} \setminus U$), and define $Y_1'$, $Y_2'$ and $Y'$ similarly.

Now suppose $x\in X'$. Since all but a $o(1)$-fraction of the external edges at $x$ belong to $\nabla_G(x,Y')$, the subgraph of $G$ induced by the corresponding vertices (that is, $G[N_G(x)\cap Y']$) has a component of size $t-o(t)$ (Corollary \ref{cor:Property2} again), say with vertex set $Y_1^x \cup Y_2^x$, where $Y_1^x \subseteq Y_1'$ and $Y_2^x \subseteq Y_2'$. Since $\Gamma[Y_1^x \cup Y_2^x] =\nabla_G(Y_1^x,Y_2^x)$, \eqref{eq:GammaOrthoToExtTrianglesofG} gives
\[yz \in \nabla_G(Y_1^x,Y_2^x) \Rightarrow |\Gamma \cap \{xy,xz\}|=1 \quad \text{and} \quad yz \in G[Y_1^x] \cup G[Y_2^x] \Rightarrow |\Gamma \cap \{xy,xz\}| \in \{0,2\}.\]
Thus the connectivity of $G[Y_1^x \cup Y_2^x]$ implies that 
\begin{equation}
\label{eq:ClaimDchoice}
\nabla_\Gamma(x,Y_1^x \cup Y_2^x)\in \{\nabla_G(x,Y_1^x), \nabla_G(x,Y_2^x)\}.
\end{equation}

Moreover, the connectivity of $G[X']$ and the fact that any $u,w \in X'$ have common $G$-neighbors in $(Y_1^u \cup Y_2^u) \cap (Y_1^w \cup Y_2^w)$ (in fact many, since $u,w \notin S_0$) imply ``coherence" of the choices in \eqref{eq:ClaimDchoice}, meaning that $u$ and $w$ choose the same option iff they are on the same side of $X_1' \cup X_2'$. Of course a similar analysis applies with the roles of $X$ and $Y$ reversed. Assuming without loss of generality that each $x \in X_1$ chooses $\nabla_\Gamma(x,Y_1^x \cup Y_2^x)=\nabla_G(x,Y_2^x)$ in \eqref{eq:ClaimDchoice}, the proposition is proved, with $A=X_1' \cup Y_1'$ and $B=X_2' \cup Y_2'$.
\end{proof}

At long last we can accomplish the goal set forth in \eqref{eq:Goal}.

\begin{proof}[Proof of {\hyperref[lem:MAINLEMMA]{main lemma}}]
Let $S,A,B \subseteq V(K)$ and $Z \subseteq \nabla(X,Y)$ be as in Proposition \ref{prop:ClaimD}, and set $W=V(K) \setminus S$ ($=A \cup B$). We analyze $K[W]$ first, and edges meeting $S$ later.

Set $p=\frac{1-c}{2td}$ and $q=\frac{1-c}{2t^2}$. Let $\varphi$ be the vector indexed by $X \cup Y$ with 
\[ \varphi_v = 
\begin{cases}
\delta_v & \text{if } v \in A \\
-\delta_v & \text{if } v \in B \\
0 & \text{if } v \in S \\
\end{cases}. \]
Let $C$ be the adjacency matrix of $H$, $J$ the $t\times t$ matrix of 1's, and $I$ the $2t\times 2t$ identity matrix. Lastly, let $N$ be the weighted adjacency matrix of $K$, and $T$ the adjacency matrix of $Z$. These matrices look like this:

\begin{center}
$N=$
\begin{tabular}{r|c|c|}
\multicolumn{1}{l}{}&\multicolumn{1}{c}{$X$}&\multicolumn{1}{c}{$Y$}\\
\cline{2-3}
&$~$&$~$\\
$X \!\!$& $\quad pC \quad$& $\quad qJ \quad$\\
&$~$&$~$\\
\cline{2-3}
&$~$&$~$\\
$Y \!\!$& $qJ$& $pC$\\
&$~$&$~$\\
\cline{2-3}
\end{tabular}
$\hspace{2.5em} T=$
\begin{tabular}{r|c|c|}
\multicolumn{1}{l}{}&\multicolumn{1}{c}{$X$}&\multicolumn{1}{c}{$Y$}\\
\cline{2-3}
& & $o(t)$ 1's\\
$X \!\!$ & $\quad 0 \quad$& per row \\
& & \\
\cline{2-3}
& $o(t)$ 1's & \\
$Y \!\!$ & per row & $0$\\
& & \\
\cline{2-3}
\end{tabular} \hspace*{5pt}.
\end{center}
\vspace{8pt}

%

On $K[W]$, the weight our configuration captures is at most what it would be if all class 2 edges, as well as all class 4 edges in $\nabla(A,B)$, were instead class 1, and all class 4 edges in $K[A] \cup K[B]$ were instead class 3. In this case, our configuration's overall loss on $K[W]$ (edges and vertices) would be exactly
\begin{equation}
\label{eq:overallloss}
\varphi^\intercal(N-2qT+(c/t)I)\varphi.
\end{equation}

To show that our configuration captures at most half the weight of $K[W]$ it would suffice to show \eqref{eq:overallloss} to be nonnegative, but let's instead show the stronger
\begin{equation}
\label{eq:MisPosdef}
\varphi^\intercal M \varphi \geq 0,
\end{equation}
where $M=N-2qT+(.66c/t)I$. Thus we're showing that the gain on edges of $K[W]$ is at most $(.66c/t)\sum_{v\in W} \delta_v^2$, reserving the remaining vertex loss in $W$, $(.34c/t)\sum_{v\in W}\delta_v^2$, for use below in handling edges meeting $S$. For \eqref{eq:MisPosdef}, we simply show $M$ is positive definite. We first treat the $N$ term and then the $T$ term, helping ourselves to a little bit of the $I$ term in each of these steps. As will be clear below, and as is perhaps hinted by the constants .66 and .34, nothing in this argument is very delicate.

Let $P$ and $Q$ be the ``$pC$'' and ``$qJ$'' portions of $N$, respectively. Since $P$ and $Q$ are symmetric and commute, they admit a common orthonormal basis of eigenvectors. We seek to describe these eigenvectors and their corresponding eigenvalues in terms of the eigenvectors and eigenvalues of $C$, so let $w_1=t^{-1/2}\mathbbold{1},w_2,\ldots,w_t$ be an orthonormal eigenbasis for $C$ with corresponding eigenvalues $d=\lambda_1 > \lambda_2 \geq \cdots \geq \lambda_t$. Then a common orthonormal eigenbasis for $P$ and $Q$ is
\[v_1=2^{-1/2}(w_1,w_1), \quad v_2=2^{-1/2}(w_1,-w_1), \ldots, \; v_{2t-1}=2^{-1/2}(w_t,w_t), \quad v_{2t}=2^{-1/2}(w_t,-w_t), \]
where $(x,y)$ is the concatenation of $x$ and $y$. These eigenvectors have corresponding eigenvalues $pd,pd,p\lambda_2,p\lambda_2,\ldots,p\lambda_t,p\lambda_t$ for $P$ and $qt, -qt, 0,0,\ldots,0$ for $Q$, and therefore $pd+qt=\frac{1-c}{t}, \; pd-qt=0, \; p\lambda_2, p\lambda_2,\ldots, p\lambda_t, p\lambda_t$ for $N$. Call these $N$-eigenvalues $\mu_1,\ldots,\mu_{2t}$ (for use below). Now since $|\lambda_t| \leq O(t^{1/3})$ (see \eqref{eq:AlongraphsLambda}), all eigenvalues of $N$ are at least $-O(t^{-4/3})=-o(t^{-1})$. Thus (e.g.) $N+(.33c/t)I$ is (eventually) positive definite. 

We now turn to the $T$ term in $M$, which is easier. As every absolute row sum of $T$ is $o(t)$, so is every eigenvalue of $T$. Thus every eigenvalue of $-2qT$ is at least $-o(t^{-1})$, so (e.g.) $-2qT+(.33c/t)I$ is (eventually) positive definite. Therefore $M$ is positive definite, as claimed.

Finally we deal with contributions involving $S$. For this let $\bar{\delta}=\langle\delta_v \mid v\in V(K)\rangle$, $\bar{\delta}'=\mathbbold{1}_W \circ \bar{\delta}$ (where $\circ$ denotes componentwise product), $\alpha_i = \bar{\delta}\cdot v_i$ and $\alpha_i' = \bar{\delta}'\cdot v_i$, $i\in [2t]$ (where $\cdot$ denotes the usual inner product). The total gain from edges meeting $S$ is at most what it would be if all these edges were class 1, which is exactly
\begin{align}
\bar{\delta}^t N \bar{\delta} - (\bar{\delta}')^t N \bar{\delta}' &= \sum_{i=1}^{2t} \mu_i(\alpha_i^2-(\alpha_i')^2) \notag \\
&= \mu_1(\alpha_1^2-(\alpha_1')^2) + \sum_{i=2}^{2t} \mu_i(\alpha_i^2-(\alpha_i')^2).\label{eq:GainOnS}
\end{align}

In view of what we know about the $\mu_i$'s, the sum in \eqref{eq:GainOnS} is at most
\begin{equation}
\label{eq:Ssecondterm}
p\lambda_2 \sum_{v\in V(K)} \delta_v^2 - (\min \mu_i)\sum_{v\in W} \delta_v^2 
\; \leq \; O(t^{-4/3}) \left[ \sum_{v\in V(K)} \delta_v^2 + \sum_{v\in W}\delta_v^2 \right],
\end{equation}
while, with $\varepsilon$ defined by $\alpha_1'=(1-\varepsilon)\alpha_1$, the first term in
\eqref{eq:GainOnS} is
\begin{align}
\mu_1(2\varepsilon-\varepsilon^2)\alpha_1^2 &= \frac{1-c}{t}(2\varepsilon-\varepsilon^2)\frac{1}{2t} \bigg(\sum_{v \in V(K)} \delta_v \bigg)^2 \notag \\
&< \varepsilon t^{-2} \bigg(\sum_{v\in V(K)} \delta_v \bigg)^2 \label{eq:SfirsttermLine2} \\
&\leq \min\left\{ \varepsilon^{-1} t^{-2} \bigg(\sum_{v\in S}\delta_v \bigg)^2, \quad 2\varepsilon t^{-1}\sum_{v \in V(K)} \delta_v^2\right\} \label{eq:SfirsttermLine3}
\end{align}
(actually \eqref{eq:SfirsttermLine2} is equal to the first expression in \eqref{eq:SfirsttermLine3}).

On the other hand, we get to subtract from these gains
\begin{align}
\frac{c}{t}\sum_{v\in S} \delta_v^2 + \frac{.34c}{t}\sum_{v\in W} \delta_v^2 &= 
\frac{.66c}{t} \sum_{v\in S}\delta_v^2 + \frac{.34c}{t}\sum_{v\in V(K)} \delta_v^2 \notag \\
&\geq \frac{.66c}{t|S|}\bigg(\sum_{v\in S}\delta_v \bigg)^2 + \frac{.34c}{t}\sum_{v\in V(K)}\delta_v^2.
\label{eq:Sgettosubtract}
\end{align}

We need to say this is larger than the sum of the right hand sides of \eqref{eq:Ssecondterm} and
\eqref{eq:SfirsttermLine3}, which is easy. For example, half the second term of \eqref{eq:Sgettosubtract} dominates the right hand side of \eqref{eq:Ssecondterm}, while the right hand side of \eqref{eq:SfirsttermLine3} is at most half the second term of \eqref{eq:Sgettosubtract} if $\varepsilon \leq .17c/2$ (to be unnecessarily precise), and otherwise, since $|S|=o(t)$, is dominated by the first term of \eqref{eq:Sgettosubtract}.
\end{proof}

\bibliographystyle{plain}
\bibliography{YusterRefs.bib}

\end{document}